\documentclass{scrartcl}
\usepackage[english]{babel}
\usepackage[utf8]{inputenc}
\usepackage[T1, T2A]{fontenc}

\usepackage{enumerate}
\usepackage{amsmath}
\usepackage{amssymb}
\usepackage{amsthm}
\usepackage{mathtools}
\usepackage{todonotes}
\usepackage{hyperref}

\theoremstyle{definition}
\newtheorem{definition}{Definition}[section]
\theoremstyle{proposition}
\newtheorem{proposition}[definition]{Proposition}
\newtheorem{theorem}[definition]{Theorem}
\newtheorem{lemma}[definition]{Lemma}
\newtheorem{corollary}[definition]{Corollary}
\theoremstyle{remark}
\newtheorem{remark}[definition]{Remark}
\newtheorem{example}[definition]{Example}

\numberwithin{equation}{section}

\DeclareMathOperator{\dist}{dist}

\title{On the rate of convergence of iterated Bregman projections and of the alternating algorithm}
\author{Christian Bargetz \and Emir Medjic}

\begin{document}
\maketitle
\begin{abstract}
  \noindent\textbf{\textsf{Abstract.}} We study the alternating algorithm for the computation of the metric projection onto the closed sum of two closed subspaces in uniformly convex and uniformly smooth Banach spaces. For Banach spaces which are convex and smooth of power type, we exhibit a condition which implies linear convergence of this method. We show these convergence results for iterates of Bregman projections onto closed linear subspaces. Using an intimate connection between the metric projection onto a closed linear subspace and the Bregman projection onto its annihilator, we deduce the convergence rate results for the alternating algorithm from the corresponding results for the iterated Bregman projection method.
  \vskip2mm
  \noindent\textbf{\textsf{Keywords.}} Alternating approximation algorithm, Banach space, Bregman projection, Uniform convexity, Uniform smoothness.
  \vskip2mm
  \noindent\textbf{\textsf{Mathematics Subject Classifications.}} 47H09, 47H30, 46B99, 65J05
\end{abstract}

\section{Introduction}

Let $X$ be a Banach space and let $M$ be a closed subspace. We denote by
\[
  P_{M}x := \{y\in M\colon \|x-y\|=d(x,M)\}
\]
the set of points which realise the distance between the subspace $M$ and the point $x\in X$. In general Banach spaces this set may be empty. On the other hand, it is well known that for reflexive spaces the set $P_{M}$ is always nonempty. If $X$ is, in addition, strictly convex, i.e., the unit sphere $S_X$ of $X$ does not contain any nontrivial segments, the set $P_{M}(x)$ consists of one point only. Therefore, for reflexive strictly convex Banach spaces and for every closed subspace $M$, we can consider the mapping
\begin{equation}\label{eq:MetricProjMap}
  P_M\colon X\to X, x\mapsto P_{M}x.
\end{equation}
In Hilbert spaces the mapping $P_M$ coincides with the orthogonal projection onto $M$.

In the context of the development of the theory of linear operators on Hilbert spaces, J.~von~Neumann showed in~\cite{vNe1949:RingsOfOperators} that given two closed subspaces $M,N\subset H$ of a Hilbert space $H$ with orthogonal projections $P_M$ and $P_N$, the sequence
\begin{equation}\label{eq:VonNeumannAlg}
  x_0:=x, \qquad x_{2n+1} := P_Mx_{2n} \quad\text{and}\quad x_{2n}:=P_Nx_{2n-1}
\end{equation}
converges to $P_{M\cap N}(x)$ for all $x\in H$. Algorithm~\eqref{eq:VonNeumannAlg} is called the \emph{von~Neumann alternating projection algorithm}. An elementary and geometric proof of this result was given by E.~Kopeck\'{a} and S.~Reich in~\cite{KR2004:VonNeumann}. Von~Neumann's convergence  result was generalised to the case of a finite number of orthogonal projections by I.~Halperin in~\cite{Hal1962:Products}. It should be noted that here it is important that the iterations are cyclic, i.e., for subspaces $M_1,\ldots,M_k$ the operator $P_{M_k}\cdots P_{M_1}$ is iterated. Although it is possible to weaken this condition, it cannot be dropped altogether as has been shown in~\cite{KM2014:ThreeProjections} where, strengthening a result from~\cite{Pas2012:AmemiyaAndoFails}, the authors show that there is an iterative sequence of three projections which does not converge in the strong operator topology. These results settled in the negative a long standing conjecture of I.~Amemiya and T.~And\^o which was stated in~\cite{AA1965:Convergence}.

In addition to the iterative method for finding a projection onto the intersection of subspaces, J.~von~Neumann also showed that for closed subspaces $M, N\subset H$ the sequence defined by
\begin{equation}
  x_0:=x, \qquad x_{2n+1} := x_{2n}-P_Mx_{2n} \quad\text{and}\quad x_{2n}:=x_{2n-1}-P_Nx_{2n-1}
\end{equation}
converges to $P_{(M+N)^\perp}x$ for all $x\in H$; see~\cite[Corollary, p.~56]{Neu1950:FunctionalOperators}. In other words, the sequence $\{((I-P_M)(I-P_N))^{n}\}_{n=0}^{\infty}$ converges to the projection $P_{(M+N)^\perp}$ in the strong operator topology.

Outside Hilbert spaces the situation is much more complicated. First of all the metric projection need not be linear. In fact, R.~A.~Hirschfeld showed in~\cite{Hir1958:BestApproximationII} that if $X$ is a Banach space which is at least three-dimensional and for every one-dimensional subspace $M$ the metric projection $P_{M}$ is additive, i.e., if it satisfies $P_{M}(x+y)=P_M(x)+P_M(y)$, then $X$ is an inner product space. In other words this means that having linear projections is a property that characterises inner-product spaces.

Moreover, for general reflexive and strictly convex spaces the metric projection need not be continuous; see~\cite{Bro1973:RotundReflexive}. On the other hand if $X$ is a uniformly convex Banach space, the metric projection is always continuous; see e.g.~\cite[Prop.~3.2]{GoebelReich1984}. Nevertheless, in contrast to the situation in Hilbert spaces, where the metric projection even on closed convex subsets $C$ is nonexpansive, this is not the case even for subspaces of uniformly convex spaces. In fact, J.~Lindenstrauss showed in~\cite{Lin1964:NonlinearProjections} that in many cases the existence of a uniformly continuous projection already implies the existence of a linear projection. Hence in these cases the metric projection on non-complemented subspaces cannot be even uniformly continuous.

The analysis of the alternating approximation method in Banach spaces started when in~\cite{Hir1958:BestApproximationII}, R.~A.~Hirschfeld posed the problem whether the property that for every pair of subspaces $M,N\subset X$ and all $x\in X$ the sequence
\begin{equation}\label{eq:AltMethod}
  ((I-P_M)(I-P_N))^nx \longrightarrow (I-P_{M+N})x
\end{equation}
converges, characterises the Hilbert spaces among Banach spaces. The first negative answer, in the two-dimensional case, was given by V.~Klee in~\cite{Kle1963:Hirschfeld} but Klee also remarked that in higher finite dimensions the situation might be markedly different. In~\cite{Sti1965:Hirschfeld}, W.~J.~Stiles answered this question in the negative by showing that in finite dimensional strictly convex spaces $X$ the sequence in~\eqref{eq:AltMethod} converges for all pairs of subspaces. Although his proof did not extend to the infinite dimensional setting, Stiles conjectured in the aforementioned paper that convergence in~\eqref{eq:AltMethod} holds in every uniformly convex and uniformly smooth Banach space. Note that~\eqref{eq:AltMethod} allows for an iterative algorithm to compute the best approximation of $x\in X$ in the space $M+N$, using only the metric projections onto $M$ and $N$. A generalisation of these results can be found in~\cite{Fra1973:AlternatingMethod}, where also weak convergence of the sequence in~\eqref{eq:AltMethod} is considered. In~1979, F.~Deutsch showed in~\cite{Deutsch1979} that under the assumption that the sum $M+N$ is a \emph{closed} subspace of a uniformly convex and uniformly smooth Banach space, the sequence in~\eqref{eq:AltMethod} converges for every $x\in X$. In the aforementioned paper, Deutsch raised the question of whether the condition that the sum of the spaces has to be closed is necessary. Very recently, this result was extended by A.~Pinkus in~\cite{Pinkus2015} to finitely many subspaces of uniformly convex and uniformly smooth Banach spaces. The question of the necessity of the closedness of the sum remains open. Summing up, the currently known properties of the alternating approximation method are the following.

\begin{theorem}[{Theorem~2.2 in~\cite{Pinkus2015}}]
  Let $X$ be a uniformly convex and uniformly smooth Banach space, and let $M_1,\ldots, M_k$ be closed linear subspaces so that $M_1+\cdots+M_k$ is closed. Then the sequence
  \[
    \{(I-P_{M_k})(I-P_{M_{k-1}})\cdots(I-P_{M_{1}})^{n}x\}_{n=0}^\infty
  \]
  converges to $x-P_{M_1+\cdots+M_k}x$ for all $x\in X$.
\end{theorem}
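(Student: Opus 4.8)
The plan is to set $M := M_1 + \cdots + M_k$ (closed by hypothesis) and $T := (I-P_{M_k})\cdots(I-P_{M_1})$, and to analyse the orbit $y_n := T^n x$ together with its intermediate iterates $y_n^{(0)} := y_n$, $y_n^{(i)} := (I-P_{M_i})y_n^{(i-1)}$, so that $y_n^{(k)} = y_{n+1}$. Two elementary observations drive everything. First, since $0\in M_i$ we have $\|(I-P_{M_i})z\| = d(z,M_i) \le \|z\|$, so $\{\|y_n\|\}$ is nonincreasing and converges to some $r\ge 0$; moreover every intermediate norm $\|y_n^{(i)}\|$ is squeezed between $\|y_{n+1}\|$ and $\|y_n\|$ and hence also converges to $r$. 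Second, each increment $y_n^{(i-1)} - y_n^{(i)} = P_{M_i}y_n^{(i-1)}$ lies in $M_i\subseteq M$, so by telescoping $x - y_n \in M$ for every $n$; this ``feasibility'' is what will ultimately pin down the limit.

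The first real step is asymptotic regularity. Fix one factor and write $a := \|y_n^{(i-1)}\|$, $b := \|y_n^{(i)}\| = d(y_n^{(i-1)}, M_i)$ and $p := P_{M_i}y_n^{(i-1)}$, so $b\le a$. Because $\tfrac12 p\in M_i$ we have $\|y_n^{(i-1)} - \tfrac12 p\| \ge b$, while $y_n^{(i-1)} - \tfrac12 p$ is the midpoint of $y_n^{(i-1)}$ and $y_n^{(i)} = y_n^{(i-1)} - p$, two vectors of norm at most $a$ at distance $\|p\|$. The modulus of uniform convexity $\delta_X$ then yields $b \le a\,(1 - \delta_X(\|p\|/a))$, i.e. $\delta_X(\|p\|/a) \le (a-b)/a$. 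Since $a-b = \|y_n^{(i-1)}\| - \|y_n^{(i)}\| \to 0$ and $a\to r$, either $r=0$ — in which case $y_n\to 0$ strongly and $d(x,M)\le r = 0$ forces $x\in M$ and $(I-P_M)x = 0 = \lim y_n$, finishing the proof — or $r>0$, in which case strict positivity of $\delta_X$ away from $0$ gives $\|P_{M_i}y_n^{(i-1)}\| \to 0$ for each $i$. Hence all intermediate increments vanish and in particular $\|T^{n+1}x - T^nx\| \to 0$.

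Next I would identify the fixed-point set. Writing $J\colon X\to X^{*}$ for the normalized duality map (single-valued since $X$ is smooth, and a norm homeomorphism since $X$ is uniformly convex and uniformly smooth), the metric projection is characterized by Birkhoff orthogonality: $P_N z = 0$ iff $\langle Jz, m\rangle = 0$ for all $m\in N$, i.e. $Jz\in N^{\perp}$. A norm-preservation argument along the factors of $T$ shows that $Tz = z$ forces $P_{M_i}z = 0$ for every $i$, whence $\mathrm{Fix}(T) = \{z : Jz\in M^{\perp}\} = (I-P_M)(X)$; in particular the candidate limit $w := (I-P_M)x$ is a fixed point. By reflexivity the bounded orbit has a weakly convergent subsequence $y_{n_j}\rightharpoonup \bar y$. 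Since $x - y_{n_j}\in M$ and $M$ is weakly closed, $x-\bar y\in M$; if in addition $J\bar y\in M^{\perp}$, then $\bar y = x - P_M x = w$ by the orthogonality characterization, so every weak cluster point equals $w$ and $y_n\rightharpoonup w$.

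The main obstacle is precisely the conjunction ``$\bar y$ is a fixed point'' together with the upgrade from weak to strong convergence. The difficulty is structural: outside Hilbert space the maps $I-P_{M_i}$ need not be nonexpansive, so one cannot invoke Fej\'er monotonicity or the usual demiclosedness of $I-T$, and one cannot pass the weak limit through the nonlinear map $J$ to read off $J\bar y\in M^{\perp}$ from $\langle Jy_{n_j}^{(i)}, m\rangle = 0$. This is where the full force of uniform convexity and uniform smoothness is needed, and I would resolve it by dualizing. Under $J$ each map $I-P_{M_i}$ corresponds to a Bregman projection onto the annihilator $M_i^{\perp}$ in $X^{*}$ for the Bregman distance of $\tfrac12\|\cdot\|^{2}$; concretely $J(y-P_{M_i}y)$ is the Bregman projection of $Jy$ onto $M_i^{\perp}$. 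Thus $\{Jy_n\}$ is an orbit of iterated Bregman projections onto $M_1^{\perp},\dots,M_k^{\perp}$, and the Banach-space analogue of the von~Neumann--Halperin theorem gives strong convergence of $Jy_n$ to the Bregman projection onto $\bigcap_i M_i^{\perp} = M^{\perp}$; since $J^{-1}$ is norm continuous, $y_n$ converges strongly, and the feasibility $x-y_n\in M$ identifies the limit as $w = (I-P_M)x$. This reduction to iterated Bregman projections is also exactly what I expect to yield quantitative convergence rates later.
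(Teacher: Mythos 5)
The theorem you are proving is quoted in the paper from Pinkus and is not proved there; the closest the paper comes is Corollary~\ref{cor:ConvergenceAlternatingAlgorithm}, which recovers the case $k=2$ by exactly the dualization you propose at the end: conjugating each $I-P_{M_i}$ by the duality map turns the alternating algorithm into iterated Bregman projections onto the annihilators $M_i^{\perp}$ (Alber's decomposition, Theorem~\ref{thm:alber:decomp}). Your preliminary steps are sound: the norm monotonicity, the feasibility relation $x-y_n\in M$, the asymptotic regularity argument via the modulus of convexity, the identification of $\mathrm{Fix}(T)$ with $J^{-1}(M^{\perp})$, and the reduction of weak convergence to the single question of whether a weak cluster point $\bar y$ satisfies $J\bar y\in M^{\perp}$. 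You also correctly diagnose that this last point, together with the upgrade to strong convergence, is the real difficulty.

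The gap is that you resolve that difficulty by appealing to a ``Banach-space analogue of the von~Neumann--Halperin theorem'' for iterated Bregman projections, and no such off-the-shelf result exists: strong convergence of the cyclic Bregman projection iteration onto $M_1^{\perp},\dots,M_k^{\perp}$, with the limit identified as the Bregman projection onto $\bigcap_i M_i^{\perp}$, is precisely the content that must be established, and it is exactly where the hypothesis that $M_1+\cdots+M_k$ is closed has to enter. In the paper this is carried out only for two subspaces (Proposition~\ref{prop:altern-bregman:Conv}), and the proof is not soft: it needs Bregman monotonicity of the iterates with respect to the intersection (Lemma~\ref{lem:altern-bregman:bregMon}), the fact that a Bregman monotone sequence whose Bregman distance to the target subspace tends to zero is norm convergent (Proposition~\ref{prop:BregMonConv}, which uses the three-point identity and the sequential consistency of the Bregman distance), and the bounded Bregman regularity of the pair of annihilators (Proposition~\ref{coro:closedSum-bdlyBregmanReg}), deduced from Bauschke--Borwein linear regularity of a pair of subspaces with closed sum together with Proposition~\ref{prop:ClosedSumEquivalence}. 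None of this is supplied by your outline, and for $k>2$ one would additionally need a $k$-fold (Bregman) regularity notion for the tuple $(M_1^{\perp},\dots,M_k^{\perp})$, plus the appropriate duality between closedness of $\sum_i M_i$ and regularity of the annihilators, neither of which the paper develops beyond two subspaces. As written, your argument proves weak subsequential convergence conditionally and then replaces the decisive step by an unproved theorem.
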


In~\cite{Pinkus2015}, A.~Pinkus poses the question of what can be said about the rate of convergence of this sequence. One of the aims of this article is to give a partial answer to this question for the case of two subspaces. Note that in the case where the metric projections are linear, the assumption that $M+N$ has to be closed can be dropped, see e.g.~\cite{Deutsch1979, Reich1982}. A similar result is true for any finite number of subspaces, see e.g.~\cite{Rei1983:LimitTheorem}.

The Bregman distance and Bregman projections were introduced by L.~Bregman in~\cite{Bregman1967} in the context of the common fixed point problem. These concepts turned out to be very useful for different problems in nonlinear analysis, including the convex feasibility problem and for the investigation of maximally monotone operators, see e.g.~\cite{Alber1997, Suantai2012, MartinMarquez2013}. The properties of these projections have been studied in detail by many authors, see e.g.~\cite{Resmerita2004, Schoepfer2008}. Algroithms including Bregman projections and iterated Bregman projections can be found in for example in~\cite{Alber1996, BauschkeEtAl2015}. A weak convergence theorem for Bregman strongly nonexpansive mappings is given in~\cite{Reich1996}. Very recent results on Bregman distances in a more general setting, can be found in~\cite{Reem2019}.

We will exhibit an intimate connection between the alternating approximation method and iterations of Bregman projections. We give regularity conditions for closed linear subspaces which ensure linear convergence of iterated Bregman projections. These conditions can be translated to provide linear convergence of the alternating algorithm.

\section{Preliminaries and Notation}

Let $X$ be a Banach space with norm $\|\cdot\|$. Recall that the modulus of convexity of $X$ is defined as
\[
  \delta_{X}\colon [0,2]\to[0,\infty), \qquad \varepsilon\mapsto \inf \left\lbrace \left. 1 - \frac{\|x+y\|}{2} \ \right| \ x,y\in S_X, \|x-y\| \geq \varepsilon \right\rbrace.
\]
The space $X$ is said to be uniformly convex if $\delta_{X}(\varepsilon)>0$ whenever $\varepsilon>0$. Note that a uniformly convex space is in particular strictly convex. 

The modulus of smoothness of $X$ is defined as
\[
  \rho_{X}\colon [0,1]\to[0,\infty), \qquad \tau\mapsto  \sup \left\lbrace \left.\frac{ \|x+\tau y\| + \|x-\tau y\|}{2}-1 \ \right| \ x,y\in S_X  \right\rbrace
\]
and $X$ is said to be uniformly smooth if
\[
  \lim_{\tau\to 0} \frac{\rho_{X}(\tau)}{\tau} = 0.
\]
Note that the norm of a uniformly smooth space is in particular Fr\'{e}chet differentiable away from the origin. The concept of uniform convexity and uniform smoothness are dual to each other in the sense that the dual space of a uniformly convex space is uniformly smooth and the other way round. Note that these spaces are in particular reflexive. We also consider the following quantitive versions of the above definitions: A Banach space $X$ is called $\rho$-convex if there is a constant $c>0$ such that
\[
  \delta_{X}(\varepsilon) \geq c \varepsilon^\rho
\]
and $\sigma$-smooth if there is a constant $C>0$ such that
\[
  \rho_{X} (\tau) \leq C \tau^\sigma
\]
where $\sigma>1$.
Note that every $\rho$-convex space is uniformly convex and every $\sigma$-smooth space is uniformly smooth. For more detailed information on uniformly convex and uniformly smooth spaces, we refer the interested reader to~\cite{Cioranescu1990,GoebelReich1984,Rei1992:Review}.

Let $X$ be a uniformly convex and uniformly smooth Banach space. We denote by
\[
  j_p\colon X\to X^*
\]
the duality mapping defined by
\[
  j_p(x) = D\left(\frac{1}{p}\|\cdot\|^{p}\right)(x)
\]
which is well-defined since the norm of a uniformly smooth Banach space is in particular Fr\'{e}chet differentiable. We will use thoughout the article that the mapping $j_p$ is bijective and its inverse is the duality mapping $j_{p^*}\colon X^*\to X$ where $\frac{1}{p}+\frac{1}{p^*}=1$. For a detailed examination of the properties of duality mappings, we refer the interested reader to Chapters~I and~II of~\cite{Cioranescu1990}, cf.~\cite{Rei1992:Review}. A brief overview of important properties of the duality mapping can be found in Section~2.2 of~\cite{Schuster2012}.

\begin{definition} \label{def:bregman-distance}
  Let $X$ be a uniformly convex and uniformly smooth Banach space and $x,y\in X$. We define by
  \[
    D_p(x,y) = \frac 1 p \|x\|^p - \frac 1 p \|y\|^p - \langle j_p(y), x-y \rangle
  \]
  the \emph{Bregman distance} of $x$ and $y$.
\end{definition}

\begin{remark}
  The Bregman distance is well defined, since by assumption $\|\cdot\|$ is Fr\'{e}chet differentiable, hence $\frac{1}{p} \|\cdot\|^p$ is differentiable too. Moreover the convexity of this mappings implies that $D_p(x,y)\geq 0$ for all $x,y\in X$.\\
  Note that some authors prefer to use the term \emph{Bregman divergence} instead of Bregman distance since it neither is symmetric nor does it satisfy the triangle inequality. It does however satisfy the so called three point identity
  \begin{equation}\label{eq:ThreePointIdentity}
    D_p(x,y) = D_p(x,z) + D_p(z,y) + \langle j_p(z)-j_p(y),x-z\rangle,
  \end{equation}
  for $x,y,z\in X$, which can be thought of a substitute for the triangle inequality. Moreover the Bregman distance from a point $x$ to a point $y$ is zero if and only if these points coincide.
\end{remark}

For the convenience of the reader, let us recall a few important properties of the Bregman distance.

\begin{proposition}\label{prop:BregmanDistanceProperties}
  Let $X$ be a uniformly convex and uniformly smooth Banach space.
  \begin{enumerate}[(i)]
  \item The mapping
    \[
      X\times X\to[0,\infty), \qquad (x,y) \mapsto D_p(x,y)
    \]
    is continuous.
  \item A sequence $(x_n)_{n=0}^{\infty}$ converges to $x\in X$ if and only if $D_p(x, x_n)\to 0$.
  \item A sequence $(x_n)_{n=0}^{\infty}$ is a Cauchy sequence in $X$ if and only if for every $\varepsilon>0$ there is an $N\in\mathbb{N}$ such that
    \[
      D_p(x_m,x_n) < \varepsilon
    \]
    for all $m,n\geq N$.
  \item Let $(x_n)_{n=0}^\infty$ and $(y_n)_{n=0}^{\infty}$ be two sequences in $X$ and assume that $(x_n)_{n=0}^{\infty}$ is bounded. Then,
    \[
      D_p(y_n,x_n) \to 0 \Rightarrow \|x_n-y_n\|\to 0.
    \]
  \item Denoting by $D_{p^*}(\cdot,\cdot)$ the Bregman distance on the dual space $X^*$ with respect to the exponent $p^*$ satisfying $\frac{1}{p}+\frac{1}{p^*}=1$, the Bregman distance satisfies
    \[
      D_p(x,y) = D_{p^{*}}(j_p(y),j_p(x))
    \]
    for all $x,y\in X$.
  \end{enumerate}
\end{proposition}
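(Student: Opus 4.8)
The plan is to dispatch the identity~(v) and the continuity~(i) by direct computation, to isolate the sequential estimate~(iv) as the single analytic core, and then to reduce~(ii) and~(iii) to~(iv) together with a coercivity estimate. The computational backbone is the observation that, since $\langle j_p(y),y\rangle=\|y\|^p$, the Bregman distance can be rewritten as
\begin{equation}\label{eq:DpRewrite}
  D_p(x,y) = \tfrac1p\|x\|^p + \tfrac{1}{p^*}\|y\|^p - \langle j_p(y),x\rangle .
\end{equation}
For~(v) I would substitute $\xi=j_p(y)$ and $\eta=j_p(x)$ into the analogue of~\eqref{eq:DpRewrite} on $X^*$, and simplify using $\|j_p(y)\|_{*}=\|y\|^{p-1}$, the relation $(p-1)p^{*}=p$ (so that $\|j_p(y)\|_*^{p^*}=\|y\|^p$), and $j_{p^*}=j_p^{-1}$; this is purely formal. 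For~(i), the only nontrivial input is that the duality mapping $j_p\colon X\to X^*$ is norm-to-norm continuous, which follows from uniform smoothness of $X$ (the norm is uniformly Fr\'echet differentiable). Continuity of $(x,y)\mapsto D_p(x,y)$ is then immediate from~\eqref{eq:DpRewrite}, since $\|\cdot\|^p$ is continuous and the pairing $\langle j_p(y),x\rangle$ is jointly continuous.

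The analytic heart is~(iv). From~\eqref{eq:DpRewrite} and the bound $\langle j_p(v),u\rangle\le\|v\|^{p-1}\|u\|$ I first obtain the coercivity estimate
\[
  D_p(u,v) \geq \tfrac1p\|u\|^p + \tfrac{1}{p^*}\|v\|^p - \|v\|^{p-1}\|u\| ,
\]
from which two facts follow because $p>1$: if $D_p(u,v)$ is bounded and one of $\|u\|,\|v\|$ is bounded then so is the other, and, since the term $\tfrac1{p^*}\|v\|^p$ dominates $\|v\|^{p-1}\|u\|$ for fixed $u$, boundedness of $D_p(x,x_n)$ alone forces $(x_n)$ to be bounded. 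To prove~(iv) itself I would argue by contradiction: given $(x_n)$ bounded with $D_p(y_n,x_n)\to0$, coercivity makes $(y_n)$ bounded, say $\|x_n\|,\|y_n\|\le R$; if $\|x_n-y_n\|\not\to0$ along a subsequence, I want a strictly positive lower bound for $D_p(y_n,x_n)$ and hence a contradiction. The key step, which I expect to be the main obstacle, is to show that on the ball of radius $R$ the function $\tfrac1p\|\cdot\|^p$ is uniformly convex with a modulus derived quantitatively from $\delta_X$, and that its first-order Taylor remainder---which is exactly $D_p(\cdot,\cdot)$---is bounded below by a strictly increasing function $\psi_R$ with $\psi_R(t)>0$ for $t>0$, so that $D_p(y,x)\ge\psi_R(\|x-y\|)$ whenever $\|x\|,\|y\|\le R$. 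This is where the uniform convexity of $X$ is genuinely used, and the difficulty is to make the estimate uniform over the ball while controlling the two distinct points at which the norm and its derivative are evaluated.

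Finally I would deduce~(ii) and~(iii) from~(iv) and coercivity. For~(ii), the forward implication is continuity~(i) applied to $D_p(x,x_n)\to D_p(x,x)=0$, while the reverse implication follows by taking the constant sequence $y_n\equiv x$ in~(iv), using coercivity to guarantee that $D_p(x,x_n)\to0$ keeps $(x_n)$ bounded. For~(iii), the forward implication is a direct uniform-continuity estimate: a norm-Cauchy sequence is bounded, and~\eqref{eq:DpRewrite} gives $D_p(x_m,x_n)\to0$ since $\|x_m\|^p-\|x_n\|^p\to0$ and $|\langle j_p(x_n),x_m-x_n\rangle|\le\|x_n\|^{p-1}\|x_m-x_n\|\to0$. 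For the reverse implication I would first use coercivity (with one index fixed) to see that a Bregman-Cauchy sequence is bounded, and then apply~(iv) to the pair of tail subsequences $(x_{m_k}),(x_{n_k})$ in a contradiction argument, concluding that the sequence is norm-Cauchy. Once~(iv) is established I expect both reductions to be routine.
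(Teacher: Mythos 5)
Your proposal is correct in outline, but it takes a genuinely different route from the paper for the trivial reason that the paper does not prove this proposition at all: it simply cites Theorem~2.60 and Lemma~2.63 of \cite{Schuster2012} and Corollary~2.4 of \cite{Butnariu2000}. What you have done is reconstruct, essentially faithfully, the proofs that live in those references. Your reductions are sound: the rewriting $D_p(x,y)=\tfrac1p\|x\|^p+\tfrac1{p^*}\|y\|^p-\langle j_p(y),x\rangle$ does make (v) a two-line computation from $\|j_p(y)\|_*=\|y\|^{p-1}$, $(p-1)p^*=p$ and $j_{p^*}=j_p^{-1}$; the coercivity bound correctly yields boundedness of the relevant sequences; and (ii) and (iii) do follow from (i), (iv) and coercivity exactly as you describe. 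The one step you flag as the ``main obstacle'' --- the uniform lower bound $D_p(y,x)\ge\psi_R(\|x-y\|)$ on balls, with $\psi_R$ vanishing only at $0$ --- is precisely the total convexity of $\tfrac1p\|\cdot\|^p$ in uniformly convex spaces, i.e.\ the content of inequalities \eqref{eq:xu-roach:convex} and \eqref{eq:xu-roach-totalconv} in Theorem~\ref{thm:xu-roach} (Xu--Roach) and of \cite{Butnariu2000}; the paper already imports this machinery and even carries out the relevant integral estimate in the proof of Proposition~\ref{coro:bregman:sym} and of Lemma~\ref{lem:BregToZeroDistToZero}, so your gap closes by invoking it rather than reproving it. The only point that deserves care if you do prove it from scratch is the one you anticipate: the Xu--Roach lower bound carries a factor $(\max\{\|x\|,\|y\|\})^p$ that degenerates as both points approach the origin, so the modulus $\psi_R$ must be extracted from the full integral form (where the degeneration of the prefactor is compensated by $\delta_X$ being evaluated at a large argument), not from the pointwise inequality alone. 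The trade-off between the two approaches is the usual one: the paper's citation is shorter and defers the analytic work, while your argument is self-contained and makes visible exactly where uniform smoothness (for (i) and the forward halves of (ii)--(iii)) and uniform convexity (for (iv) and the reverse halves) enter.
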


\begin{proof}
  See e.g. Theorem~2.60 and Lemma~2.63 in~\cite[pp.~45--46]{Schuster2012}. Assertion~(iv) is part of Corollary~2.4 in~\cite{Butnariu2000}.
\end{proof}

\begin{definition}[Bregman projection]\index{Bregman-!projection}
  Let $M$ be a closed linear subspace of $X$. We denote by
  \[
    \Pi_M^p x = \underset{m\in M}{\mathrm{arg \ min}}\ D_p(m,x)
  \]
  the \emph{Bregman projection} from $x$ onto $M$. We denote by
  \[
    D_p(M,x) := D_p(\Pi_M^p x,x)
  \]
  the Bregman distance of $x$ to the subspace $M$.
\end{definition}

Note that by Proposition 3.6 in~\cite{Schoepfer2008} the Bregman projection satisfies
\begin{equation}\label{eq:BregmanNorm}
  \|\Pi^p_{M}x\| \leq \|x\|
\end{equation}
for all $x\in X$. In addition, we will need a few properties well-known of the Bregman projection which we repear here for the convenience of the reader.

\begin{proposition}[{Proposition 3.7 in~\cite[p.10]{Schoepfer2008}}]\label{prop:bregmanprojAnnihilator}
  Let $M\subset X$ be a closed linear subspace and $x,y\in X$ be given. Then the following assertions are equivalent:
  \begin{enumerate}[(i)]
  \item $y = \Pi_M^p x$
  \item $y \in M$ and $j_p(y) - j_p(x) \in M^\perp$
  \end{enumerate}
\end{proposition}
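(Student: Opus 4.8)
The plan is to recognise $\Pi_M^p x$ as the unique minimiser over $M$ of the strictly convex, Fréchet-differentiable functional $g(m) := D_p(m,x) = \frac1p\|m\|^p - \langle j_p(x), m\rangle + \mathrm{const}$, and to characterise that minimiser by a first-order optimality condition. The entire argument can be funnelled through the three-point identity~\eqref{eq:ThreePointIdentity}: applied with first argument $m$, middle point $y$, and last argument $x$, it reads
\begin{equation}\label{eq:3pt-proj}
  D_p(m,x) = D_p(m,y) + D_p(y,x) + \langle j_p(y)-j_p(x),\, m-y\rangle
\end{equation}
for every $m\in X$. Before starting I would note that existence and uniqueness of the minimiser follow from reflexivity together with the strict convexity of $\frac1p\|\cdot\|^p$, so that $\Pi_M^p x$ is well defined.

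For the implication $(ii)\Rightarrow(i)$ I would simply read off~\eqref{eq:3pt-proj}. If $y\in M$ and $j_p(y)-j_p(x)\in M^\perp$, then for every $m\in M$ we have $m-y\in M$, so the pairing in~\eqref{eq:3pt-proj} vanishes and $D_p(m,x) = D_p(m,y) + D_p(y,x) \geq D_p(y,x)$, with equality precisely when $D_p(m,y)=0$, i.e.\ when $m=y$ (the Bregman distance vanishing only on the diagonal). Hence $y$ is the unique minimiser of $g$ over $M$, which is exactly the assertion $y=\Pi_M^p x$.

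For $(i)\Rightarrow(ii)$ I would perturb the minimiser inside $M$. Fix $m\in M$ and consider $y+tm\in M$ for $t\in\mathbb{R}$. Since $y$ minimises $g$ over $M$, inserting $y+tm$ into~\eqref{eq:3pt-proj} gives $0\leq D_p(y+tm,x)-D_p(y,x)=D_p(y+tm,y)+t\langle j_p(y)-j_p(x),m\rangle$ for all $t$. The key point is that $D_p(y+tm,y)=o(t)$ as $t\to 0$: by the definition of $D_p$ one has $D_p(y+tm,y)=\frac1p\|y+tm\|^p-\frac1p\|y\|^p-t\langle j_p(y),m\rangle$, so this is nothing but the differentiability of $t\mapsto\frac1p\|y+tm\|^p$ at $t=0$ with derivative $\langle j_p(y),m\rangle$, which holds by construction of $j_p$ under the uniform smoothness hypothesis recalled in Section~2. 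Dividing the inequality by $t>0$ and letting $t\downarrow 0$ yields $\langle j_p(y)-j_p(x),m\rangle\geq 0$, while dividing by $t<0$ and letting $t\uparrow 0$ yields the reverse inequality; hence $\langle j_p(y)-j_p(x),m\rangle=0$. As $m\in M$ was arbitrary, $j_p(y)-j_p(x)\in M^\perp$.

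The only genuinely technical step is the estimate $D_p(y+tm,y)=o(t)$, and I expect it to be the main obstacle in the sense that it is the one place where the smoothness assumption is actually consumed; everything else is bookkeeping with~\eqref{eq:3pt-proj}. The $(ii)\Rightarrow(i)$ direction is essentially immediate once~\eqref{eq:3pt-proj} is specialised, which is why I would set up the three-point identity first and let both implications draw on it.
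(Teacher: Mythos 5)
Your proof is correct. Note that the paper does not prove this proposition at all --- it is quoted, with its proof, from Proposition~3.7 of the cited work of Sch\"opfer, Schuster and Louis --- so there is no internal argument to compare against; what you give is essentially the standard variational characterisation that the reference relies on, made self-contained. Funnelling both implications through the three-point identity~\eqref{eq:ThreePointIdentity}, specialised to $D_p(m,x)=D_p(m,y)+D_p(y,x)+\langle j_p(y)-j_p(x),\,m-y\rangle$, is a clean way to organise it: (ii)$\Rightarrow$(i) then reads off immediately, with uniqueness supplied by the fact that $D_p(m,y)=0$ only for $m=y$ (recorded in the remark after Definition~\ref{def:bregman-distance}, and resting on strict convexity); and the directional-derivative argument for (i)$\Rightarrow$(ii) is sound, since $D_p(y+tm,y)=\tfrac1p\|y+tm\|^p-\tfrac1p\|y\|^p-t\langle j_p(y),m\rangle=o(t)$ is precisely the differentiability of $t\mapsto\tfrac1p\|y+tm\|^p$ at $t=0$, which holds even when $y=0$ because $p>1$. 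Two minor remarks: for the existence of the minimiser you should invoke coercivity of $m\mapsto\tfrac1p\|m\|^p-\langle j_p(x),m\rangle$ (again a consequence of $p>1$) and weak lower semicontinuity in addition to reflexivity and strict convexity; and in the second implication G\^ateaux differentiability of the norm power would already suffice, so the full uniform smoothness hypothesis is not really consumed there.
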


This characterisation in terms of the annihilator of $M$ can be thought of as a generalisation of the characterisation of the orthogonal projection by orthogonality of image and kernel.

\begin{proposition}[{Corollary 2.2 in~\cite[p.~41]{Alber1997}}]\label{prop:Bregman-SQNE}
  Let $C\subset X$ be a closed, convex subset. The Bregman projection onto $C$ is Breman strongly quasi-nonexpansive, i.e. it satisfies
  \begin{align}\label{eq:sqne-ineq}
    D_p(z,\Pi_C^p x) \leq D_p(z,x) - D_p(\Pi_C^p x, x).
  \end{align}
  for all $z\in C$ and all $x\in X$. In the case where $C$ is not only convex but a linear subspace the above is true with equality instead of inequality.
\end{proposition}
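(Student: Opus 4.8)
The plan is to reduce everything to the three-point identity~\eqref{eq:ThreePointIdentity} together with the first-order optimality condition characterising the minimiser $\Pi_C^p x$. Write $\hat x := \Pi_C^p x$. Applying~\eqref{eq:ThreePointIdentity} with $z,x,\hat x$ in place of the three arguments $x,y,z$ yields
\[
  D_p(z,x) = D_p(z,\hat x) + D_p(\hat x, x) + \langle j_p(\hat x) - j_p(x), z - \hat x\rangle,
\]
and hence
\[
  D_p(z,\hat x) = D_p(z,x) - D_p(\hat x, x) - \langle j_p(\hat x) - j_p(x), z - \hat x\rangle.
\]
Thus the asserted inequality~\eqref{eq:sqne-ineq} is equivalent to the variational inequality $\langle j_p(\hat x) - j_p(x), z - \hat x\rangle \geq 0$ holding for all $z\in C$.

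To establish this variational inequality, I would exploit that $\hat x$ minimises the convex functional $f(m) := D_p(m,x)$ over $C$. Since the norm is Fr\'echet differentiable away from the origin by uniform smoothness, $\frac1p\|\cdot\|^p$ is differentiable with derivative $j_p$, so that $f$ is differentiable with derivative $f'(m) = j_p(m) - j_p(x)$. For a fixed $z\in C$ and $t\in(0,1]$ the convexity of $C$ gives $\hat x + t(z-\hat x)\in C$, and minimality of $\hat x$ forces $f(\hat x + t(z-\hat x)) \geq f(\hat x)$. Dividing by $t$ and letting $t\to 0^+$ produces the directional derivative $\langle f'(\hat x), z-\hat x\rangle \geq 0$, which is precisely the desired inequality. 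Substituting back into the identity above yields~\eqref{eq:sqne-ineq}.

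For the case where $C = M$ is a closed linear subspace, the inner product term vanishes identically rather than merely being nonnegative. Indeed, by Proposition~\ref{prop:bregmanprojAnnihilator} the projection satisfies $j_p(\hat x) - j_p(x)\in M^\perp$, while $z-\hat x\in M$ because both $z$ and $\hat x$ lie in the subspace $M$. Hence $\langle j_p(\hat x) - j_p(x), z-\hat x\rangle = 0$, and the displayed identity collapses to $D_p(z,\hat x) = D_p(z,x) - D_p(\hat x, x)$, i.e. equality in~\eqref{eq:sqne-ineq}.

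The main obstacle I anticipate is the passage to the directional derivative: one must justify interchanging the limit with the difference quotient and confirm that the one-sided G\^ateaux derivative of $\frac1p\|\cdot\|^p$ at $\hat x$ in the direction $z-\hat x$ really equals $\langle j_p(\hat x), z-\hat x\rangle$. This is where uniform smoothness enters essentially, since it guarantees Fr\'echet, hence G\^ateaux, differentiability of the norm and thereby makes the first-order optimality argument rigorous. Everything else is bookkeeping with the three-point identity and the annihilator characterisation of the Bregman projection.
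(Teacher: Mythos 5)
Your argument is correct. The paper does not prove this proposition at all --- it simply cites Corollary~2.2 of Alber and Butnariu --- so there is no internal proof to compare against; your route (three-point identity~\eqref{eq:ThreePointIdentity} to isolate the term $\langle j_p(\hat x)-j_p(x),z-\hat x\rangle$, then the first-order variational inequality for the minimiser of the convex differentiable functional $m\mapsto D_p(m,x)$ over $C$) is precisely the standard argument behind the cited result, and invoking Proposition~\ref{prop:bregmanprojAnnihilator} to make the pairing vanish in the subspace case is the right way to upgrade the inequality to an equality. The one obstacle you flag, differentiability at the origin, is not actually an issue: for $p>1$ the functional $\frac1p\|\cdot\|^p$ is Fr\'echet differentiable at $0$ as well (with derivative $j_p(0)=0$, since $\|h\|^p=o(\|h\|)$), so the directional-derivative computation is valid wherever $\hat x$ happens to lie.
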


In~\cite{XuRoach1991}, Z.~B.~Xu and G.~F.~Roach gave characterisations of uniformly convex and uniformly smooth spaces in terms of inequalities including powers of the norm. Since we make frequent use of these inequalities, we repeat them here for the convenience of the reader.

\begin{theorem}[{Theorem~1 and Theorem~2 in~\cite[p.~195 and p.~204]{XuRoach1991}}]\label{thm:xu-roach}
  Let $X$ be a Banach space and let $1<p<\infty$. Then the following assertions are equivalent
  \begin{enumerate}[(i)]
  \item $X$ is uniformly smooth,
  \item the following inequality holds for all $x,y\in X$ with $\|x\|+\|y\| \not = 0$:
    \begin{equation} \label{eq:xu-roach:smooth}
      \|x + y\|^p \leq \|x\|^p + p \langle j_p(x), y \rangle + \sigma_p(x, y),
    \end{equation}
    where \[\sigma_p(x,y) \leq p \int_0^1 \frac{(\max\{\|x+ty\|, \|x\|\})^p}{t} K \rho_X\left( \frac{t \|y\|}{\max\{\|x+ty\|, \|x\|\}} \right)\mathrm{d} t, \] for some $K>0$.
  \end{enumerate}
  The following assertions are also equivalent 
  \begin{enumerate}[(i)]
  \item $X$ is uniformly convex,
  \item the following inequality holds for all $x,y\in X$ with $\|x\|+\|y\| \not = 0$:
    \begin{equation} \label{eq:xu-roach:convex} 
      \|x + y\|^p \geq \|x\|^p + p \langle j_p(x), y \rangle + \sigma_p(x, y),
    \end{equation}
    where \[\sigma_p(x,y) \geq p \int_0^1 \frac{(\max\{\|x+ty\|, \|x\|\})^p}{t} K \delta_X\left( \frac{t \|y\|}{2\max\{\|x+ty\|, \|x\|\}} \right)\mathrm{d} t, \] for some $K>0$,
  \item the following inequality holds for all $x,y\in X$ with $\|x\|+\|y\| \not = 0$:
    \begin{align} \label{eq:xu-roach-totalconv}
      \langle j_p(x)-j_p(y), x-y\rangle \geq (\max\{\|x\|, \|y\|\})^pK \delta_X \left( \frac{\|x-y\|}{2\max\{\|x\|,\|y\|\}}\right),
    \end{align}  
    for some $K>0$.
  \end{enumerate}
\end{theorem}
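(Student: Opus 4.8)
The plan is to derive both families of inequalities from a single integral identity and to treat uniform smoothness and uniform convexity as the two sides of one pointwise estimate. Since the norm of a uniformly smooth (or uniformly convex) space is Fréchet differentiable away from the origin with gradient $j_p$, the scalar function $t\mapsto \tfrac1p\|x+ty\|^p$ is differentiable on $[0,1]$ with derivative $\langle j_p(x+ty), y\rangle$. The fundamental theorem of calculus then gives
\begin{equation*}
  \tfrac1p\|x+y\|^p - \tfrac1p\|x\|^p = \int_0^1 \langle j_p(x+ty), y\rangle\,\mathrm{d}t,
\end{equation*}
so that, subtracting the first-order term $\langle j_p(x),y\rangle$, the remainder $\sigma_p(x,y) = \|x+y\|^p-\|x\|^p-p\langle j_p(x),y\rangle$ is given exactly by
\begin{equation*}
  \sigma_p(x,y) = p\int_0^1 \langle j_p(x+ty) - j_p(x), y\rangle\,\mathrm{d}t.
\end{equation*}
Writing $u = x+ty$ and $v = x$, the integrand equals $\tfrac1t\langle j_p(u)-j_p(v), u-v\rangle$ and $\|u-v\| = t\|y\|$, which already exhibits the factor $1/t$ and the argument $t\|y\|$ occurring in the stated bounds for $\sigma_p$.

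Given this identity, everything reduces to the two pointwise estimates
\begin{align*}
  \langle j_p(u)-j_p(v), u-v\rangle &\leq K\,(\max\{\|u\|,\|v\|\})^p\,\rho_X\!\left(\frac{\|u-v\|}{\max\{\|u\|,\|v\|\}}\right),\\
  \langle j_p(u)-j_p(v), u-v\rangle &\geq K\,(\max\{\|u\|,\|v\|\})^p\,\delta_X\!\left(\frac{\|u-v\|}{2\max\{\|u\|,\|v\|\}}\right),
\end{align*}
valid when $X$ is uniformly smooth, respectively uniformly convex; note that the second of these is exactly assertion~(iii). To prove them I would first reduce to $\max\{\|u\|,\|v\|\}=1$ using the homogeneity $j_p(\lambda w)=\lambda^{p-1}j_p(w)$ together with the scaling of $\rho_X$ and $\delta_X$, and then pass to the midpoint representation $m=\tfrac12(u+v)$, $h=\tfrac12(u-v)$, so that $\langle j_p(u)-j_p(v),u-v\rangle = 2\langle j_p(m+h)-j_p(m-h),h\rangle$. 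This increment of $j_p$ is to be compared with the symmetric second difference $\|m+h\|^p+\|m-h\|^p-2\|m\|^p$, which the definition of $\delta_X$ bounds from below and the definition of $\rho_X$ bounds from above; for the convexity bound the comparison is immediate from the monotonicity of $j_p$, whereas the smoothness bound needs the additional quantitative continuity of $j_p$ furnished by $\rho_X$.

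The converse directions (ii)$\Rightarrow$(i) are obtained by specialising the inequality: taking $x,y\in S_X$ and letting the step size tend to $0$ recovers $\rho_X(\tau)/\tau\to 0$ from the smoothness inequality, while the convexity inequality, applied to suitable $u,v$ on the sphere, forces $\delta_X(\varepsilon)>0$ for every $\varepsilon>0$. The equivalence of the formulations (ii) and (iii) for uniform convexity is then a formal consequence of the integral identity of the first paragraph: (iii) is precisely the pointwise estimate, and substituting it into $\sigma_p(x,y)=p\int_0^1\tfrac1t\langle j_p(u)-j_p(v),u-v\rangle\,\mathrm{d}t$ yields (ii). This closes the cycle (i)$\Rightarrow$(iii)$\Rightarrow$(ii)$\Rightarrow$(i), and analogously (i)$\Leftrightarrow$(ii) in the smooth case.

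I expect the main obstacle to be the pointwise estimates of the second paragraph, and within them the smoothness upper bound in particular. The difficulty is that $\rho_X$ and $\delta_X$ encode second-order information about the norm, whereas $\langle j_p(u)-j_p(v),u-v\rangle$ is a first-order increment of the duality map, itself a derivative; while the second difference $\int_{-1}^1(1-|s|)\psi''(s)\,\mathrm{d}s$ and the increment $\int_{-1}^1\psi''(s)\,\mathrm{d}s$ (for $\psi(s)=\tfrac1p\|m+sh\|^p$) are easily comparable in the convex direction, controlling the increment from above requires genuinely using the smoothness modulus rather than the second difference alone. Making the constant $K$ uniform and handling the asymmetry between $\|u\|$ and $\|v\|$—which is what produces the $\max$ in the arguments—is the delicate technical point. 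Once these estimates are established, the integration in the first paragraph is routine and reproduces exactly the integral expressions bounding $\sigma_p(x,y)$.
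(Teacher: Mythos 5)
This statement is not proved in the paper at all: it is quoted, as a special case, from Theorems~1 and~2 of Xu and Roach, so there is no internal argument to measure yours against. On its own terms, your outline has the right skeleton. The representation $\sigma_p(x,y)=p\int_0^1 t^{-1}\langle j_p(x+ty)-j_p(x),\,ty\rangle\,\mathrm{d}t$ is valid wherever the norm is differentiable, reducing both halves of the theorem to two-point estimates on $\langle j_p(u)-j_p(v),u-v\rangle$ is indeed how Xu and Roach organise their proof, and deducing (ii) from (iii) by integration, as well as recovering the moduli in the directions (ii)$\Rightarrow$(i) by testing on the unit sphere, is routine bookkeeping.

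The gap is that the two pointwise estimates you isolate are never established, and they are not consequences of the definitions --- they essentially \emph{are} the theorem. On the convexity side, the midpoint trick with $\psi(s)=\tfrac1p\|m+sh\|^p$ and the monotonicity of $\psi'$ does give
\begin{equation*}
  \langle j_p(u)-j_p(v),u-v\rangle \;\geq\; \tfrac{2}{p}\Bigl(\|u\|^p+\|v\|^p-2\bigl\|\tfrac{u+v}{2}\bigr\|^p\Bigr),
\end{equation*}
but to proceed you must bound this second difference from below by $(\max\{\|u\|,\|v\|\})^p\,\delta_X\bigl(\|u-v\|/(2\max\{\|u\|,\|v\|\})\bigr)$. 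The definition of $\delta_X$ only constrains midpoints of pairs of \emph{unit} vectors and says nothing about the $p$-th power of the norm or about pairs with $\|u\|\neq\|v\|$; passing from $\delta_X$ to a quantitative uniform convexity statement for $\|\cdot\|^p$ with the modulus and the $\max$-normalisation as stated is precisely the content of Xu--Roach's Theorem~1, so the argument as written begs the question. On the smoothness side the situation is worse: the same comparison bounds the increment from \emph{below} by the second difference, i.e.\ in the wrong direction, so the ``additional quantitative continuity of $j_p$ furnished by $\rho_X$'' that you invoke is exactly the missing, and hardest, ingredient rather than something that can be appealed to. Two further points: your claim that (iii) follows ``immediately from the monotonicity of $j_p$'' cannot be right, since monotonicity alone yields only nonnegativity, not a lower bound involving $\delta_X$; and the convexity half of the theorem is asserted for a general Banach space, where the norm need not be differentiable, so neither your integral identity nor the single-valued $j_p$ is available there --- Xu and Roach work with the subdifferential and one-sided derivatives in that case.
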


\begin{remark}
  The equivalences given above are just two particular cases of the characterisations given in~\cite{XuRoach1991}.
\end{remark}

The inequalities above can be use the obtain inequalities between the Bregman distance and powers of the norm distance.

For the particular case of $\rho = p$ and $\sigma = p$ the following proposition is contained in Theorem~2.60 in~\cite[pp.~45--46]{Schuster2012}.

\begin{proposition}
  \label{coro:bregman:sym}
  Let $X$ be a Banach space and $R>0$.
  If $X$ is $\rho$-convex, i.e. uniformly convex of power type $\rho$, and $p\leq \rho$, then the inequality
  \begin{align}\label{eq:bregman:sym:1}
    D_p(x,y) \geq C_\rho \|x -y\|^\rho
  \end{align} 
  holds for all $x,y \in X$ with $\|x\|, \|y\| < R$.\\
  If $X$ is $\sigma$-smooth, i.e. uniformly smooth of power type $\sigma$, and $p\geq \sigma$, then the inequality
  \begin{align}\label{eq:bregman:sym:2}
    D_p(x,y) \leq C_\sigma \| x -y \|^\sigma
  \end{align}
  holds for all $x,y \in X$ with $\|x\|, \|y\| < R$.\\
  If $X$ is both, $\rho$-convex and $\sigma$-smooth, and $\sigma \leq p \leq \rho$, then the inequality
  \begin{align}\label{eq:bregman:sym:3}
    D_p(x,y) \leq \frac{C_\sigma}{C_\rho^\frac{1}{\rho}} D_p(y,x)^{\frac{\sigma}{\rho}}
  \end{align}
  holds for all $x,y \in X$ with $\|x\|, \|y\| < R$.
\end{proposition}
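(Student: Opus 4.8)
The plan is to read off the Bregman distance as the remainder term in the Xu--Roach inequalities of Theorem~\ref{thm:xu-roach} and then to control that remainder with the power-type moduli. From Definition~\ref{def:bregman-distance} we have
\[
  p\,D_p(x,y) = \|x\|^p - \|y\|^p - p\langle j_p(y), x-y\rangle ,
\]
and substituting the pair $(y,\,x-y)$ into \eqref{eq:xu-roach:convex} and \eqref{eq:xu-roach:smooth} (so that both left-hand sides $\|y+(x-y)\|^p$ collapse to $\|x\|^p$) yields, after rearranging, $p\,D_p(x,y)\geq\sigma_p(y,x-y)$ in the uniformly convex case and $p\,D_p(x,y)\leq\sigma_p(y,x-y)$ in the uniformly smooth case. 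Thus each of the three claims reduces to estimating the integral expression for $\sigma_p(y,x-y)$, with $\|b\|=\|x-y\|$ and $M_t:=\max\{\|y+t(x-y)\|,\|y\|\}$ throughout.

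For the lower bound \eqref{eq:bregman:sym:1} I would start from $p\,D_p(x,y)\geq\sigma_p(y,x-y)$, insert the integral lower bound for $\sigma_p$ together with the power-type estimate $\delta_X(\varepsilon)\geq c\varepsilon^\rho$, and pull the factor $\|x-y\|^\rho$ out of the integral. What remains is to bound $\int_0^1 t^{\rho-1}M_t^{\,p-\rho}\,\mathrm{d}t$ from below. Here the boundedness hypothesis enters: since $\|y+t(x-y)\|\leq(1-t)\|y\|+t\|x\|<R$ we get $M_t<R$, and because $p\leq\rho$ the exponent $p-\rho$ is nonpositive, so $M_t^{\,p-\rho}\geq R^{\,p-\rho}$ and the integral is at least $R^{\,p-\rho}/\rho$. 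Collecting the constants produces \eqref{eq:bregman:sym:1} with a $C_\rho$ depending on $c$, $K$, $\rho$, $p$ and $R$.

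The upper bound \eqref{eq:bregman:sym:2} is entirely parallel: use $p\,D_p(x,y)\leq\sigma_p(y,x-y)$, insert the integral upper bound for $\sigma_p$ with $\rho_X(\tau)\leq C\tau^\sigma$, factor out $\|x-y\|^\sigma$, and estimate $\int_0^1 t^{\sigma-1}M_t^{\,p-\sigma}\,\mathrm{d}t\leq R^{\,p-\sigma}/\sigma$; this time the hypothesis $p\geq\sigma$ makes $p-\sigma\geq0$, so that $M_t<R$ gives $M_t^{\,p-\sigma}\leq R^{\,p-\sigma}$. Finally, \eqref{eq:bregman:sym:3} follows by chaining the two: applying \eqref{eq:bregman:sym:1} with the roles of $x$ and $y$ interchanged bounds $\|x-y\|$ by a constant multiple of $D_p(y,x)^{1/\rho}$, and inserting this into the $\sigma$-power estimate \eqref{eq:bregman:sym:2} gives the desired bound $D_p(x,y)\lesssim D_p(y,x)^{\sigma/\rho}$.

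I expect the difficulties to be bookkeeping rather than conceptual. The most delicate point is keeping the arguments of the moduli inside their domains. For $\delta_X$ this is automatic, since its argument $t\|x-y\|/(2M_t)$ is at most $1$ (from $2M_t\geq\|y+t(x-y)\|+\|y\|\geq t\|x-y\|$) and $\delta_X$ is defined on $[0,2]$; for $\rho_X$, however, the argument $t\|x-y\|/M_t$ can be as large as $2$, so one has to extend the power-type bound past $\tau=1$, which is harmless because $\rho_X(\tau)\leq\tau\leq\tau^\sigma$ for $\tau\geq1$ and $\sigma\geq1$. One should also dispose of the degenerate case $y=0$ (where $M_t$ may vanish) separately, for instance by the continuity of $D_p$ from Proposition~\ref{prop:BregmanDistanceProperties} or by direct computation. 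Apart from these technicalities the argument is a direct translation of Theorem~\ref{thm:xu-roach} into statements about $D_p$.
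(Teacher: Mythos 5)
Your argument is correct and is essentially the paper's own proof: the authors likewise substitute the pair consisting of a base point and the difference vector into the Xu--Roach inequalities (they write $\bar x = x$, $\bar y = y-x$, which is your substitution with the roles of $x$ and $y$ renamed), identify $p\,D_p(\cdot,\cdot)$ with the rearranged remainder, and estimate the $\sigma_p$-integral by pulling out $\|x-y\|^\rho$ or $\|x-y\|^\sigma$ via the power-type moduli and bounding $M_t^{\,p-\rho}$ from below (resp.\ $M_t^{\,p-\sigma}$ from above) by $R^{\,p-\rho}$ (resp.\ $R^{\,p-\sigma}$), with the third inequality obtained by chaining the first two exactly as you describe. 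Your additional remarks on the domain of $\rho_X$ and on the degenerate case are legitimate technical points that the paper silently glosses over, but they do not alter the route.
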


\begin{proof}
  For $x,y\in X$ set $\bar x = x$ and $\bar y = y-x$ and with $X$ being $\sigma$-smooth, we conclude from inequality~\eqref{eq:xu-roach:smooth} that
  \begin{align*}  
    \|y\|^p &\leq \|x\|^p + p\langle j_p(x), y-x\rangle + \\
            & \qquad  + p\int_0^1 \frac{K}{t} (\max\{\|(1-t)x + ty\|, \|x\|\})^p \rho_X\left( \frac{t \|y-x\|}{\max\{\|(1-t)x+ty\|, \|x\|\}} \right) \mathrm{d}t \\
            &\leq \|x\|^p + p\langle j_p(x), y-x\rangle + p\int_0^1 K C R^{p-\sigma} t^{\sigma-1} \|y-x\|^\sigma \mathrm{d}t\\
            & = \|x\|^p + p\langle j_p(x), y-x\rangle + \frac p \sigma K C R^{p-\sigma}\|y-x\|^\sigma.
  \end{align*}
  Rearranging the terms above, we obtain
  \[
    p D_p(y,x) = \|y\|^p-\|x\|^p - p\langle j_p(x), y-x\rangle \leq  \frac p \sigma K C R^{p-\sigma}\|y-x\|^\sigma,
  \]
  where $C > 0$ is chosen so that $\rho_X(t) \leq C t^\sigma$.
  
  In the case that $X$ is $\rho$-convex we obtain from inequality~\eqref{eq:xu-roach:convex} that,
  \begin{align*}  
    \|y\|^p &\geq \|x\|^p + p\langle j_p(x), y-x\rangle + \\
            & \qquad + p\int_0^1 \frac{K}{t} (\max\{\|(1-t)x + ty\|, \|x\|\})^p \delta_X\left( \frac{t \|y-x\|}{2\max\{\|(1-t)x+ty\|, \|x\|\}} \right) \mathrm{d}t \\
            &\geq \|x\|^p + p\langle j_p(x), y-x\rangle + p\int_0^1 K C' R^{p-\rho} t^{\rho-1} 2^{-\rho} \|y-x\|^\rho \mathrm{d}t\\
            & = \|x\|^p + p\langle j_p(x), y-x\rangle + \frac p \rho K C' 2^{-\rho} R^{p-\rho}\|y-x\|^\rho.
  \end{align*}
  Again, rearranging the terms yields
  \[
    p D_p(y,x) = \|y\|^p-\|x\|^p - p\langle j_p(x), y-x\rangle \geq  \frac p \rho K C' 2^{-\rho} R^{p-\rho}\|y-x\|^\rho,
  \]
  where the constant $C'>0$ is chosen so that $\delta_X(t) \geq C' t^\rho$ and we used the fact that $p-\rho <0$ in the second inequality.
  
  The third inequality is a direct consequence of the first two.
\end{proof}

The above proposition holds for $x,y$ in a bounded subset of $X$. In a special case, we are able to get rid of this boundedness assumption.
\begin{corollary}\label{coro:bregman:sym:2-sc}
  Let $X$ be $2$-convex and $2$-smooth Banach space. Then the inqualities
  \[
    D_2(x,y) \leq C \|x-y\|^2 \quad \text{and} \quad D_2(x,y) \geq C' \|x-y\|^2
  \]
  hold globally. Moreover, there is a constant $\tilde{C}>0$ such that
  \[
    D_2(x,y) \leq \tilde{C} D_2(y,x).
  \]
\end{corollary}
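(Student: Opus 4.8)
The plan is to revisit the proof of Proposition~\ref{coro:bregman:sym} and observe that the boundedness hypothesis $\|x\|,\|y\|<R$ entered only through the prefactor $R^{p-\sigma}$ (respectively $R^{p-\rho}$) that appeared when estimating the integral terms in the Xu--Roach inequalities. For the borderline choice $p=\sigma=\rho=2$ these exponents vanish, the prefactors degenerate to $1$, and the resulting constants become independent of $R$. Hence the same computation yields inequalities valid for all $x,y\in X$.

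Concretely, for the upper bound I would repeat the first computation in the proof of Proposition~\ref{coro:bregman:sym} with $p=\sigma=2$. Applying inequality~\eqref{eq:xu-roach:smooth} to $\bar x=x$ and $\bar y=y-x$ and using $\rho_X(\tau)\leq C\tau^2$, the integrand is bounded by
\[
  \frac{K}{t}\bigl(\max\{\|(1-t)x+ty\|,\|x\|\}\bigr)^{2}\,\rho_X\!\left(\frac{t\|y-x\|}{\max\{\|(1-t)x+ty\|,\|x\|\}}\right)\leq KC\,t\,\|y-x\|^{2},
\]
where the two powers of the $\max$-term cancel exactly because $p-\sigma=0$. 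Integrating over $[0,1]$ and rearranging gives $D_2(y,x)\leq C\|x-y\|^2$ with a constant independent of $R$; after swapping the roles of $x$ and $y$ this is $D_2(x,y)\leq C\|x-y\|^2$. The lower bound is obtained in the same way from the convexity inequality~\eqref{eq:xu-roach:convex} together with $\delta_X(\tau)\geq C'\tau^2$; here the exponent $p-\rho=0$ again cancels the $\max$-factor, producing the global estimate $D_2(x,y)\geq C'\|x-y\|^2$.

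Finally, the symmetry-type estimate follows by chaining the two global inequalities just established. Since $\|x-y\|=\|y-x\|$, we have
\[
  D_2(x,y)\leq C\|x-y\|^2 = C\|y-x\|^2 \leq \frac{C}{C'}\,D_2(y,x),
\]
so the claim holds with $\tilde C=C/C'$.

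I do not expect a serious obstacle here: the entire content of the corollary is that the case $p=2$ in a $2$-convex and $2$-smooth space is precisely the degenerate case in which the $R$-dependent prefactor becomes trivial. The only point that needs to be verified with care is this cancellation of the $\max$-term inside the integral, which is exactly what forces the restriction $\|x\|,\|y\|<R$ in the general proposition but is harmless once $p-\sigma=p-\rho=0$.
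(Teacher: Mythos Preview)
Your argument is correct and is exactly the approach taken in the paper: the proof there simply notes that the constants $C_\sigma$, $C_\rho$ from Proposition~\ref{coro:bregman:sym} carry the $R$-dependence only through the factors $R^{p-\sigma}$ and $R^{p-\rho}$, which become $1$ when $p=\sigma=\rho=2$, so the inequalities hold globally and the symmetry estimate follows by combining them. Your write-up merely makes the cancellation of the $\max$-factor inside the Xu--Roach integral more explicit than the paper does.
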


\begin{proof}
  The constants $C_\sigma, C_\rho$ in Proposition~\ref{coro:bregman:sym} depend on $R$ by a factor of the form $R^{p-\sigma}$ or $ R^{p-\rho}$. Since we assume that $p=\rho=\sigma=2$ this dependence vanishes and these inequalities hold globally.
\end{proof}

\begin{remark}
  Note that every $2$-convex and $2$-smooth Banach space is isormorphic to a Hilbert space, see e.g. Theorem~2.9 in~\cite[p.~43]{BenyaminiLindenstrauss}.
\end{remark}

We can now use these inequalities to establish a connection between the norm distance to a subspace and the Bregman distance to this subspace.

\begin{lemma}\label{lem:bregmanproj-metricproj:ineq}
  Let $X$ be uniformly convex of power type $\rho$, uniformly smooth of power type $\sigma$ and $\sigma \leq p \leq \rho$. Then for every $R>0$ there are constants $C_1,C_2>0$, such that the inequalities
  \begin{align}\label{eq:bregmanproj-metricproj:1}
    C_1 D_p(\Pi_M^p x ,x )^\frac{1}{\rho} \geq \mathrm{dist}(x,M)
  \end{align}
  and
  \begin{align}\label{eq:bregmanproj-metricproj:2}
    \mathrm{dist}(x,M) \geq C_2 D_p(\Pi_M^p x, x)^\frac{1}{\sigma}
  \end{align}
  hold for all $x \in X$ with $\|x\|<R$.
\end{lemma}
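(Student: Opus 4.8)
The plan is to compare the Bregman projection $\Pi_M^p x$ with the metric projection $P_M x$, exploiting that the former minimises $D_p(\cdot, x)$ over $M$ while the latter minimises $\|\cdot - x\|$, and to sandwich the two quantities by means of Proposition~\ref{coro:bregman:sym}. Throughout, recall that a uniformly convex space is reflexive and strictly convex, so that the metric projection $P_M x$ is well defined and single-valued.

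For the first inequality~\eqref{eq:bregmanproj-metricproj:1}, I would set $m^\ast = \Pi_M^p x$ and observe that by~\eqref{eq:BregmanNorm} we have $\|m^\ast\| \leq \|x\| < R$, so that both $m^\ast$ and $x$ lie in the ball of radius $R$. Applying the power-type convexity estimate~\eqref{eq:bregman:sym:1} to the pair $(m^\ast, x)$ gives $D_p(m^\ast, x) \geq C_\rho \|m^\ast - x\|^\rho$. Since $m^\ast \in M$, we have $\|m^\ast - x\| \geq \mathrm{dist}(x, M)$, whence $D_p(\Pi_M^p x, x)^{1/\rho} \geq C_\rho^{1/\rho}\, \mathrm{dist}(x, M)$; taking $C_1 = C_\rho^{-1/\rho}$ yields~\eqref{eq:bregmanproj-metricproj:1}.

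For the second inequality~\eqref{eq:bregmanproj-metricproj:2} I would instead use the metric projection $m_0 = P_M x$ as a competitor in the minimisation: because $\Pi_M^p x$ minimises $D_p(\cdot, x)$ over $M$, we have $D_p(\Pi_M^p x, x) \leq D_p(m_0, x)$. Applying now the power-type smoothness estimate~\eqref{eq:bregman:sym:2} to the pair $(m_0, x)$ gives $D_p(m_0, x) \leq C_\sigma \|m_0 - x\|^\sigma = C_\sigma\, \mathrm{dist}(x, M)^\sigma$. Combining these and taking $\sigma$-th roots produces $\mathrm{dist}(x, M) \geq C_\sigma^{-1/\sigma}\, D_p(\Pi_M^p x, x)^{1/\sigma}$, so that $C_2 = C_\sigma^{-1/\sigma}$.

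The one point requiring care — and the only real obstacle — is the boundedness hypothesis needed to invoke Proposition~\ref{coro:bregman:sym}, whose constants depend on the radius. For the first inequality this is automatic from~\eqref{eq:BregmanNorm}. For the second inequality, however, the metric projection $m_0$ need not lie in the ball of radius $R$: using $0 \in M$ we only obtain $\mathrm{dist}(x, M) \leq \|x\| < R$ and hence $\|m_0\| \leq \|x\| + \|m_0 - x\| < 2R$. I would therefore apply Proposition~\ref{coro:bregman:sym} with radius $2R$ in place of $R$, which merely alters the value of the constant $C_\sigma$ and is harmless, since the lemma permits $C_1, C_2$ to depend on $R$.
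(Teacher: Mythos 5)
Your proof is correct and takes essentially the same route as the paper's: inequality \eqref{eq:bregman:sym:1} applied to the pair $(\Pi_M^p x, x)$ together with $\|\Pi_M^p x - x\|\geq\dist(x,M)$ for the first estimate, and the minimality of $\Pi_M^p x$ against the competitor $P_M x$ combined with \eqref{eq:bregman:sym:2} for the second. You are in fact more careful than the paper, which disposes of the second inequality with ``follows analogously'' and does not address the point you raise that $P_M x$ is only guaranteed to lie in the ball of radius $2R$.
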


\begin{proof}
  Let~$x\in X$ with $\|x\|<R$. Since $\|\Pi_M^p x \| \leq \|x\|$, we also have $\|\Pi_M^{p}x\|<R$. From inequality~\eqref{eq:bregman:sym:1} in Corollary~\ref{coro:bregman:sym} we may conclude that 
  \[
    \mathrm{dist}(x,M) = \|P_M x - x\| \leq \|\Pi_M^p x - x \| \leq C_1 D_p(\Pi_M^p x ,x )^\frac{1}{\rho}.
  \]  
  The second equations follows analogously from inequality~\eqref{eq:bregman:sym:2} in Corollary~\ref{coro:bregman:sym}.
\end{proof}

\section{Bregman regularity properties of pairs of subspaces}

\begin{definition} \label{def:bregman-regularities}
  We say the pair $(M,N)$ is \emph{Bregman regular}, if for every $\varepsilon >0$, there is a $\delta>0$, such that for all $x\in X$ the implication
  \begin{equation}
    \max \left\lbrace D_p(M,x), D_p(N,x)\right\rbrace < \delta \Rightarrow D_p(M\cap N, x) < \varepsilon,
  \end{equation}
  holds. The pair $(M,N)$ is said to be \emph{boundedly Bregman regular}, if for every bounded set $S\subset X$ and every $\varepsilon>0$, there is a $\delta >0$ such that 
  \begin{equation}
    \max \left\lbrace D_p(M,x), D_p(N,x)\right\rbrace < \delta \Rightarrow D_p(M\cap N, x) < \varepsilon,
  \end{equation}
  holds true that. $(M,N)$ is called \emph{linear Bregman regular}, if there is constant $\kappa >0$ such that 
  \begin{equation}
    D_p(M\cap N,x) \leq \kappa \max\{D_p(M,x), D_p(N,x)\},
  \end{equation}
  for all $x\in X$.
  Finally, $(M,N)$ is called \emph{boundedly linear Bregman regular}, if for every bounded set $S\subset X$ there is constant $\kappa >0$ such that 
  \begin{equation}
    D_p(M\cap N,x) \leq \kappa \max\{D_p(M,x), D_p(N,x)\}
  \end{equation}
  for all $x\in S$.
\end{definition}
\begin{proposition}\label{prop:bregman:regularity}
  For a pair $(M,N)$ of closed linear subspaces the following assertions are equivalent:
  \begin{enumerate}[(i)]
  \item $(M,N)$ is Bregman regular,
  \item $(M,N)$ is linear Bregman regular,
  \item $(M,N)$ is boundedly linear Bregman regular.
  \end{enumerate}
\end{proposition}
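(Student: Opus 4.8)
The plan is to establish the cycle of implications $(iii)\Rightarrow(ii)\Rightarrow(i)\Rightarrow(iii)$, of which the first two are essentially trivial and all the real work lies in the final step $(i)\Rightarrow(iii)$.

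First I would dispose of the easy directions. That $(iii)\Rightarrow(ii)$ is immediate: bounded linear Bregman regularity gives a constant $\kappa$ on the particular bounded set $S$, but to get (ii) we need a single $\kappa$ valid for \emph{all} $x\in X$; so this implication actually runs the other way. Let me reorganise: I would prove $(ii)\Rightarrow(iii)\Rightarrow(i)\Rightarrow(ii)$. Here $(ii)\Rightarrow(iii)$ is trivial since a global constant $\kappa$ serves on every bounded set $S$. For $(iii)\Rightarrow(i)$, fix $\varepsilon>0$; I would like to produce $\delta$ uniformly in $x$, but bounded linear regularity only controls $x$ in a fixed bounded set, so the honest deduction is to use the \emph{homogeneity} of the Bregman distance with respect to scaling of $x$ together with the estimates of Proposition~\ref{coro:bregman:sym}, or else to argue directly that Bregman regularity (i) and linear Bregman regularity (ii) are equivalent and that both self-improve to the bounded versions.

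The crux is $(i)\Rightarrow(ii)$, i.e.\ upgrading the qualitative $\varepsilon$--$\delta$ statement to the linear (Lipschitz-type) estimate with a uniform constant $\kappa$. The natural strategy is a homogeneity argument. Using Proposition~\ref{prop:bregmanprojAnnihilator} one sees that the Bregman projection interacts with the duality map $j_p$, which is positively homogeneous of degree $p-1$; correspondingly $D_p(M,\lambda x)=\lambda^p D_p(M,x)$ for $\lambda>0$, and likewise for $N$ and for $M\cap N$. Thus the three quantities $D_p(M,x)$, $D_p(N,x)$ and $D_p(M\cap N,x)$ are all homogeneous of the same degree $p$ in $x$. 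This means the regularity inequality in (ii) is scale-invariant, so it suffices to prove it on the unit sphere (or for $\|x\|\le 1$); combined with (i), which on a bounded set yields for each $\varepsilon$ a $\delta$, one extracts the constant $\kappa$ by a standard normalisation: set $\kappa=\varepsilon/\delta$ for a convenient choice such as $\varepsilon=1$, and rescale.

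The step I expect to be the main obstacle is making this homogeneity-plus-normalisation argument rigorous at the origin and for the $\max$ on the left-hand side. The subtle point is that the implication in (i) is only assumed for a single pair $(\varepsilon,\delta)$ at a time, whereas the linear estimate must hold for all $x$ simultaneously with one constant; one must rule out a sequence $x_n$ with $\max\{D_p(M,x_n),D_p(N,x_n)\}\to 0$ while the ratio $D_p(M\cap N,x_n)/\max\{D_p(M,x_n),D_p(N,x_n)\}\to\infty$. I would handle this by contradiction: assuming no such $\kappa$ exists, extract such a sequence, normalise via homogeneity so that $\max\{D_p(M,x_n),D_p(N,x_n)\}=1$ say (or tends to a fixed positive value), and then the hypothesis (i) forces $D_p(M\cap N,x_n)$ to stay bounded, contradicting the blow-up of the ratio. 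The delicate part is controlling the normalisation when the $x_n$ escape to infinity or collapse to $0$, which is exactly where the power-type estimates relating $D_p$ to $\|\cdot\|^p$ from Proposition~\ref{coro:bregman:sym} and Lemma~\ref{lem:bregmanproj-metricproj:ineq} become essential.
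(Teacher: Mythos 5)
Your plan follows essentially the same route as the paper's proof: the crux, upgrading (i) to (ii), is handled in both cases by the homogeneity $\Pi_M^p(\lambda y)=\lambda\,\Pi_M^p(y)$ and $D_p(\lambda x,\lambda y)=\lambda^p D_p(x,y)$, normalising so that the hypothesis of (i) applies and reading off $\kappa=1/\delta$ for the $\delta$ belonging to $\varepsilon=1$; the passage between the global and bounded linear versions is likewise a rescaling into a fixed (Bregman) ball. One concrete warning about your final paragraph: the difficulties you anticipate when the points escape to infinity or collapse to the origin dissolve if you normalise by the quantity $m(y)=\max\{D_p(M,y),D_p(N,y)\}$ itself rather than by $\|y\|$, i.e.\ set $x=(\delta/m(y))^{1/p}\,y$ for $y\notin M\cap N$, so that $m(x)=\delta$ and hypothesis (i) applies to $x$ regardless of the size of $\|y\|$; no compactness or contradiction argument is needed. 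In particular, you should not fall back on Proposition~\ref{coro:bregman:sym} or Lemma~\ref{lem:bregmanproj-metricproj:ineq} as you suggest: these require $X$ to be convex and smooth of power type, a hypothesis not present in this proposition, so a proof genuinely relying on them would only establish the equivalence in that restricted setting.
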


The proof of this equivalences follows the ideas of the proof of equivalence of the corresponding ordinary regularity properties given in~\cite[p.~196]{Bauschke1993}.

\begin{proof}
  We first show that assertion (i) implies (ii). With this aim, let $\varepsilon >0$. By assumtion there is a $\delta > 0$, such that
  \[
    m(x) < \delta \Rightarrow D_p(\Pi_{M\cap N}^px,x) < \varepsilon
  \]
  where
  \[
    m(x) := \max \{D_p(\Pi_M^p x, x), D_p(\Pi_N^px, x)\}.
  \]
  Now let $y\in X \setminus (M\cap N)$ and set
  \[
    x := \left( \frac{\delta}{m(y)}\right)^\frac{1}{p} y.
  \]
  Then, by using the homogeneity $\Pi_M^p (\lambda y) = \lambda \Pi_M^p(y)$ and $D_p(\lambda x, \lambda y) = \lambda^p D_p(x,y)$, we obtain
  \[
    \varepsilon > D_p(\Pi_{M\cap N}^p x,x) =\frac{\delta}{m(y)} D_p(\Pi_{M\cap N}^p y,y),
  \]
  which is equivalent to 
  \[
    D_p(\Pi_{M\cap N}^p y,y) < \frac{\varepsilon m(y)}{\delta}  = \frac{\varepsilon}{\delta} \max \{  D_p(\Pi_M^p y, y), D_p(\Pi_N^p y, y)\}.
  \]
  Since the above is true for arbitrary $\varepsilon>0$, we may set $\varepsilon =1$ and $\displaystyle \kappa := \frac{1}{\delta}$ which finishes the proof of linear Bregman regularity.\\
  Since (iii) is a weaker version of (ii) the implication (ii)$\Rightarrow$(iii) is immediate.\\
  We are left to prove (iii)$\Rightarrow$(ii). Denote by $B_p(0,1) = \{ x\in X \ | \ D_p(x,0)\leq 1\}$ and observe that $B_p(0,1)$ is a Bregman bounded subset of $X$. From assertion (iii) we derive the existence of a constant $\kappa>0$ such that
  \[
    D_p(A\cap B,x) \leq \kappa \max\{D_p(A,x),D_p(B,x)\} \quad \text{ for all } x\in B_p(0,1)
  \]
  Now for arbitrary nonzero $y\in X$ we set
  \[
    z:= \frac{y}{D_p(y,0)^\frac{1}{p}}
  \]
  and obtain
  \[
    D_p(M\cap N,z) \leq \kappa m(z) \Leftrightarrow \frac{1}{D_p(y,0)} D_p(M\cap N,y) \leq \frac{\kappa}{D_p(y,0)} m(y)
  \]
  which in turn implies $D_p(M\cap N,y) \leq \kappa m(y)$ as required.
\end{proof}

We will now show that there is a large class of pairs of spaces $(M, N)$ which are boundedly Bregman regular. In order to do so we use the following lemma.

\begin{lemma}\label{lem:BregToZeroDistToZero}
  Let $X$ be a uniformly convex and uniformly smooth Banach space and let $M\subset X$ be a closed linear subspace. Let $\{x_n\}_{n=0}^{\infty}$ be a sequence in $X$ with $D_p(M,x_n)\to 0$. Then also the metric distance $d(x_n,M)$ converges to zero.
\end{lemma}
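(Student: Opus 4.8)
The plan is to bound the metric distance by the norm distance to the Bregman projection and then invoke the implication in Proposition~\ref{prop:BregmanDistanceProperties}(iv). Since $\Pi_M^p x_n \in M$, we have $d(x_n,M) \le \|x_n - \Pi_M^p x_n\|$, so it suffices to prove $\|x_n - \Pi_M^p x_n\| \to 0$. Setting $y_n := \Pi_M^p x_n$, the hypothesis reads $D_p(y_n, x_n) = D_p(\Pi_M^p x_n, x_n) = D_p(M, x_n) \to 0$, which is exactly the antecedent of the implication $D_p(y_n, x_n)\to 0 \Rightarrow \|x_n - y_n\|\to 0$ in Proposition~\ref{prop:BregmanDistanceProperties}(iv). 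By~\eqref{eq:BregmanNorm} we also have $\|y_n\| = \|\Pi_M^p x_n\| \le \|x_n\|$, so once $(x_n)$ is known to be bounded, so is $(y_n)$, and the conclusion follows at once.

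The delicate point --- and the step I expect to be the main obstacle --- is precisely the boundedness hypothesis on $(x_n)$ required by Proposition~\ref{prop:BregmanDistanceProperties}(iv). The assumption $D_p(M, x_n)\to 0$ does \emph{not} by itself force $(x_n)$ to be bounded: any sequence lying in $M$ with $\|x_n\|\to\infty$ satisfies $D_p(M, x_n)=0$. Using Proposition~\ref{prop:bregmanprojAnnihilator} together with the identity $\langle j_p(x), m\rangle = \|m\|^p$ for the projection $m = \Pi_M^p x$, one computes the convenient formula $D_p(M, x) = \tfrac{p-1}{p}\big(\|x\|^p - \|\Pi_M^p x\|^p\big)$, which makes transparent that $D_p(M, x_n)$ can be small while $\|x_n\|$ is large, as long as $\|\Pi_M^p x_n\|$ tracks $\|x_n\|$.

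To organise the argument I would therefore proceed by contradiction and split into cases: assuming $d(x_n,M)\ge \varepsilon$ along a subsequence, any \emph{bounded} sub-subsequence is immediately ruled out by the clean application of Proposition~\ref{prop:BregmanDistanceProperties}(iv) above. The genuinely problematic case is $\|x_n\|\to\infty$. Here the tempting move is to normalise, $\tilde x_n := x_n/\|x_n\|$, exploiting the homogeneities $\Pi_M^p(\lambda x)=\lambda\Pi_M^p x$ and $D_p(\lambda x,\lambda y)=\lambda^p D_p(x,y)$; this yields a unit-norm, hence bounded, sequence with $D_p(M,\tilde x_n)\to 0$ and therefore $\|\tilde x_n - \Pi_M^p \tilde x_n\|\to 0$. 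Unfortunately this only gives $\|x_n - \Pi_M^p x_n\| = o(\|x_n\|)$, which is too weak to conclude when $\|x_n\|\to\infty$, and indeed in spaces such as $\ell^p$ with $1<p<2$ one can keep $d(x_n,M)$ bounded away from zero while $D_p(M,x_n)\to 0$ along an unbounded sequence. I therefore expect boundedness to be the real content of the lemma: either one exploits the interplay between $M$ and the duality mapping more quantitatively, or --- as seems most natural, since this lemma serves only to establish \emph{bounded} Bregman regularity, where $x$ ranges over a fixed bounded set --- one restricts to bounded sequences, for which the proof reduces to the two-line application of Proposition~\ref{prop:BregmanDistanceProperties}(iv) indicated above.
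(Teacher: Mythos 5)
Your reduction for bounded sequences---$d(x_n,M)\le\|x_n-\Pi_M^p x_n\|$ followed by Proposition~\ref{prop:BregmanDistanceProperties}(iv) applied to $y_n=\Pi_M^p x_n$---is correct and is a shorter route than the paper's. The paper instead substitutes $y=\Pi_M^p x-x$ into the Xu--Roach inequality~\eqref{eq:xu-roach:convex} to obtain
\[
  D_p(\Pi_M^p x,x)\ \ge\ K\int_0^1\frac{R^p}{t}\,\delta_X\!\left(\frac{t\|\Pi_M^p x-x\|}{2R}\right)\mathrm{d}t
\]
and argues that the left-hand side tending to zero forces the argument of $\delta_X$, and hence $\|\Pi_M^p x_n-x_n\|$, to tend to zero. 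The two arguments are morally the same, since assertion (iv) of Proposition~\ref{prop:BregmanDistanceProperties} is itself proved from estimates of this kind.

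More importantly, your diagnosis of the boundedness issue is on target, and it applies to the paper's own proof: there $R$ is a bound on $\|x\|$ (indeed $\max\{\|x+ty\|,\|x\|\}=\|x\|$ because of~\eqref{eq:BregmanNorm}), and the step ``the integrand must tend to zero'' is only valid if $R$ stays bounded along the sequence. Your $\ell^q$ example with $1<q<2$ can be made completely explicit: for $M=\operatorname{span}\{(1,1)\}\subset\ell_q^2$ and $x_t=(t+1,t-1)$, using $\|\Pi_M^q x\|^q=\langle j_q(x),\Pi_M^q x\rangle$ and your formula $D_q(M,x)=\tfrac{q-1}{q}(\|x\|^q-\|\Pi_M^q x\|^q)$, a direct expansion gives $D_q(M,x_t)=(q-1)t^{q-2}+O(t^{q-4})\to 0$ while $d(x_t,M)=d((1,-1),M)>0$ is constant. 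So the statement genuinely requires the additional hypothesis that $(x_n)$ be bounded. This is harmless for the rest of the paper: in Proposition~\ref{coro:closedSum-bdlyBregmanReg} one works on a fixed bounded set, and in Proposition~\ref{prop:BregMonConv} the sequence is Bregman monotone and hence bounded by~\eqref{eq:BregmanMonBounded}. With that hypothesis added, your two-line argument is a complete proof; without it, no proof is possible.
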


\begin{proof}
  Setting $y := \Pi_M^p x  -x $ in~\eqref{eq:xu-roach-totalconv} allows us to conclude from Theorem \ref{thm:xu-roach}, that the inequality
  \begin{align}\label{thm:boundedlyreg-ineq1}
    D_p (\Pi_M^p x , x) \geq K \int_0^1 \frac{R^p}{t} \delta_X \left(\frac{t\|\Pi_M^p x - x \|}{2R}\right) \mathrm{d}t
  \end{align}
  holds for all $x\in X$.  Recall that $\delta_X(t) >0$ whenever $t > 0$ since $X$ is uniformly convex. This implies that the expression~$\delta_X (t)R^p / t$ is strictly positive for every $t>0$. Therefore
  \[
    K \int_0^1 \frac{R^p}{t} \delta_X \left(\frac{t\|\Pi_M^p x_n - x_n \|}{2R}\right) \mathrm{d}t \leq D_p(\Pi^p_Mx_n,x_n) \xrightarrow[n\to\infty]{} 0
  \]
  imlies that the argument of the integral has to converge to zero and therefore also
  \[
    \|\Pi_M^p x_n - x_n \| \xrightarrow[n\to\infty]{} 0.
  \]
  Finally using $\Pi_{M}x_n\in M$, we conclude that
  \[
    d(x_n, M) \leq     \|\Pi_M^p x_n - x_n \| \xrightarrow[n\to\infty]{} 0,
  \]
  as claimed.
\end{proof} 

\begin{proposition} \label{coro:closedSum-bdlyBregmanReg}
  Let $X$ be an uniformly convex and uniformly smooth Banach space and let $M,N\subset X$ be closed linear subspaces, such that $M+N$ is closed. Then $(M,N)$ is boundedly Bregman regular.
\end{proposition}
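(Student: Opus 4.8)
The plan is to argue by contradiction, reducing the Bregman statement to the corresponding metric statement and then exploiting the closedness of $M+N$ through the open mapping theorem. Suppose the pair $(M,N)$ were \emph{not} boundedly Bregman regular. Then there would be a bounded set $S\subset X$ and an $\varepsilon_0>0$ such that, taking $\delta=1/n$, we could find $x_n\in S$ with $\max\{D_p(M,x_n),D_p(N,x_n)\}\to 0$ while $D_p(M\cap N,x_n)\ge\varepsilon_0$ for all $n$. Since $\{x_n\}\subset S$ is bounded, fix $R>0$ with $\|x_n\|<R$ for all $n$.

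First I would pass from Bregman distances to metric distances: applying Lemma~\ref{lem:BregToZeroDistToZero} once to $M$ and once to $N$ yields $d(x_n,M)\to 0$ and $d(x_n,N)\to 0$. Writing $m_n:=P_Mx_n$ and $n_n:=P_Nx_n$ for the metric projections (which exist since $X$ is reflexive and strictly convex), we have $\|x_n-m_n\|\to 0$ and $\|x_n-n_n\|\to 0$, and hence $\|m_n-n_n\|\le\|m_n-x_n\|+\|x_n-n_n\|\to 0$.

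The heart of the argument, and the step I expect to be the main obstacle, is to upgrade this to $d(x_n,M\cap N)\to 0$; this is precisely where the closedness of $M+N$ is needed. Consider the bounded linear map $T\colon M\times N\to M+N$, $(m,n)\mapsto m-n$, which is surjective between Banach spaces because $M$, $N$ and $M+N$ are closed. By the open mapping theorem there is a constant $c>0$ such that every $u\in M+N$ admits a preimage $(\tilde m,\tilde n)\in M\times N$ with $\|\tilde m\|+\|\tilde n\|\le c\|u\|$. Applying this to $u_n:=m_n-n_n$ produces $\tilde m_n\in M$ and $\tilde n_n\in N$ with $\tilde m_n-\tilde n_n=m_n-n_n$ and $\|\tilde m_n\|,\|\tilde n_n\|\le c\|u_n\|\to 0$. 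Setting $w_n:=m_n-\tilde m_n=n_n-\tilde n_n$, the first representation shows $w_n\in M$ and the second shows $w_n\in N$, so $w_n\in M\cap N$; moreover $\|x_n-w_n\|\le\|x_n-m_n\|+\|\tilde m_n\|\to 0$, whence $d(x_n,M\cap N)\le\|x_n-w_n\|\to 0$.

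Finally I would translate back to the Bregman distance. With $w_n\in M\cap N$ as above and $\|x_n-w_n\|\to 0$, the minimality defining $\Pi^p_{M\cap N}x_n$ gives $D_p(M\cap N,x_n)\le D_p(w_n,x_n)$. Expanding $D_p(w_n,x_n)=\tfrac1p\|w_n\|^p-\tfrac1p\|x_n\|^p-\langle j_p(x_n),w_n-x_n\rangle$ and using that $\{x_n\}$ and $\{w_n\}$ are bounded while $\|w_n-x_n\|\to 0$, the difference of the powers tends to $0$ (as $t\mapsto t^p$ is uniformly continuous on bounded intervals) and the pairing is bounded by $\|x_n\|^{p-1}\|w_n-x_n\|\to 0$; hence $D_p(w_n,x_n)\to 0$ and so $D_p(M\cap N,x_n)\to 0$. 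This contradicts $D_p(M\cap N,x_n)\ge\varepsilon_0>0$ and completes the proof. (One could alternatively invoke a known linear-regularity characterisation of closed sums for the third step, but the open mapping argument keeps the proof self-contained.)
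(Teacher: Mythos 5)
Your proof is correct, and it shares the overall skeleton of the paper's argument --- pass from Bregman distances to metric distances via Lemma~\ref{lem:BregToZeroDistToZero}, use the closedness of $M+N$ to force $d(x_n,M\cap N)\to 0$, then translate back to the Bregman distance --- but you execute the second and third stages by genuinely different means. For the second stage the paper simply cites the Bauschke--Borwein result that a closed sum implies linear metric regularity of $(M,N)$, whereas you re-derive the qualitative regularity you need from scratch by applying the open mapping theorem to $(m,n)\mapsto m-n$ on $M\times N$; your construction of $w_n=m_n-\tilde m_n=n_n-\tilde n_n\in M\cap N$ with $\|x_n-w_n\|\to 0$ is sound, and it makes the proof self-contained at the cost of not recording a linear constant $\kappa$ (which is not needed for this qualitative statement, though the paper does exploit the linear version later, in Proposition~\ref{prop:lin-Bregman-reg-2sc}). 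For the third stage the paper invokes the Xu--Roach smoothness inequality to bound $D_p(P_{M\cap N}x_n,x_n)$ by a modulus-of-smoothness integral, whereas you expand the definition of $D_p(w_n,x_n)$ directly and use only the boundedness of the sequences, the uniform continuity of $t\mapsto t^p$ on bounded intervals, and $\|j_p(x_n)\|_{*}=\|x_n\|^{p-1}$; this is more elementary and bypasses the Xu--Roach machinery entirely. Both routes are valid; yours is arguably cleaner for the statement as given, while the paper's choice of ingredients is dictated by their reuse in the quantitative results that follow.
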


\begin{proof}
  Recall that by~\cite[p. 200, Corollary 4.5.]{Bauschke1993}, the pair $(M,N)$ is linearly regular, whenever the sum $M+N$ is closed. Therefore there is a $\kappa > 0$ such that
  \[
    \dist(M\cap N,x) \leq \kappa \max \{\dist(M,x),\dist(N,x)\}
  \]
  for all $x\in X$. Using Theorem \ref{thm:xu-roach}, we now argue that the inequality
  \begin{align}\label{thm:boundedlyreg-ineq2}
    D_p(M\cap N,x) \leq D_p(P_{M\cap N}x,x) \leq C \int_0^1 \frac{R^p}{t} \rho_X\left( \frac{t \|P_{M\cap N}x-x\|}{\|(1-t)x - tP_{M\cap N}x\|}\right) \mathrm{d}t
  \end{align}
  holds for all $x\in X$. This can be seen by setting $y := P_{M\cap N}x -x $ in \eqref{eq:xu-roach:smooth}.\\
  Now given a sequence $(x_n)_{n\in \mathbb{N}}$ satisfying
  \[
    \max \{ D_p(M ,x_n), D_p(N ,x_n) \} \underset{n\to\infty}{\longrightarrow} 0,
  \]
  we may use Lemma~\ref{lem:BregToZeroDistToZero} to obtain that $\dist(x_n,M)\to 0$. Similarly, $\dist(x_n,N)\to 0$ by replacing $M$ by $N$ in the above argument.
  Using the linear regularity of $(M,N)$  this means that $\|P_{M\cap N}x -x \| \xrightarrow[n\to\infty]{} 0$.
  Now combining this behaviour with~\eqref{thm:boundedlyreg-ineq2}, we may conclude that $D_p(M\cap N, x) \xrightarrow[n\to\infty]{} 0$ which finishes the proof.
\end{proof}

For Banach spaces $X$ which are in a certain sense very close to Hilbert spaces, we obtain an even stronger result.

\begin{proposition}\label{prop:lin-Bregman-reg-2sc}
  Let $X$ be $2$-convex and $2$-smooth and set $\rho = \sigma = p = 2$. Given two closed linear subspaces $M, N\subset X$ with closed sum. Then the pair $(M,N)$ is linear Bregman regular, i.e. there is a constant $\kappa>0$ such that
  \begin{align}\label{eq:lin-Bregman-reg}
    D_p(\Pi_{M\cap N}^p x, x) \leq \kappa \max \{D_p(\Pi_M^p x, x), D_p(\Pi_N^px, x)\}.
  \end{align}
  for all $x\in X$.
\end{proposition}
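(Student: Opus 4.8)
The plan is to prove the estimate by chaining together two ingredients: a global two-sided comparison between the Bregman distance $D_p(\Pi_L^p x, x)$ to a closed subspace $L$ and the square $\dist(x,L)^2$ of the metric distance, together with the ordinary \emph{metric} linear regularity of the pair $(M,N)$. The crucial point --- and the reason the hypothesis $\rho=\sigma=p=2$ enters --- is that in a $2$-convex and $2$-smooth space the Bregman--metric comparison holds with constants independent of $\|x\|$; this is exactly the content of Corollary~\ref{coro:bregman:sym:2-sc}, which removes the boundedness restriction present in the general Lemma~\ref{lem:bregmanproj-metricproj:ineq}.

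First I would record, for an arbitrary closed subspace $L\subset X$, the global estimate
\[
  c_1 \dist(x,L)^2 \leq D_p(\Pi_L^p x, x) \leq c_2 \dist(x,L)^2, \qquad x\in X,
\]
with constants $c_1,c_2>0$ depending only on $X$. The upper bound holds because $\Pi_L^p x$ minimises $D_p(\cdot, x)$ over $L$, so $D_p(\Pi_L^p x, x) \leq D_p(P_L x, x) \leq C\|P_L x - x\|^2 = C\,\dist(x,L)^2$ by the global inequality $D_2(u,v)\leq C\|u-v\|^2$ of Corollary~\ref{coro:bregman:sym:2-sc}. The lower bound follows from $\dist(x,L) = \|P_L x - x\| \leq \|\Pi_L^p x - x\|$ combined with the companion global inequality $D_2(u,v)\geq C'\|u-v\|^2$. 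These are precisely the estimates of Lemma~\ref{lem:bregmanproj-metricproj:ineq} specialised to $\rho=\sigma=2$, now valid on all of $X$ because the constants no longer depend on $R$.

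Next I would invoke the metric linear regularity of $(M,N)$: since $M+N$ is closed, \cite[p.~200, Corollary~4.5]{Bauschke1993} provides a constant $\kappa_0>0$ with
\[
  \dist(x, M\cap N) \leq \kappa_0 \max\{\dist(x,M), \dist(x,N)\}, \qquad x\in X.
\]
Squaring this and inserting it into the two-sided estimate yields
\[
  D_p(\Pi_{M\cap N}^p x, x) \leq c_2\,\dist(x,M\cap N)^2 \leq c_2\kappa_0^2 \max\{\dist(x,M)^2, \dist(x,N)^2\},
\]
and since $\dist(x,M)^2 \leq c_1^{-1} D_p(\Pi_M^p x, x)$ and likewise for $N$, the right-hand side is at most $\frac{c_2\kappa_0^2}{c_1}\max\{D_p(\Pi_M^p x, x), D_p(\Pi_N^p x, x)\}$. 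Thus \eqref{eq:lin-Bregman-reg} holds with $\kappa = c_2\kappa_0^2/c_1$.

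There is no serious analytic obstacle once the global comparison is in hand, since the remaining argument is a monotone substitution together with the elementary identity $\max\{a,b\}^2=\max\{a^2,b^2\}$ for $a,b\geq 0$. The one step that genuinely requires care --- and where the argument would fail for a general uniformly convex and uniformly smooth space --- is the global validity of the Bregman--metric comparison. Outside the $2$-convex $2$-smooth setting the constants $c_1,c_2$ degrade like a power of $R=\|x\|$, and the mismatch between the exponents $\rho$ and $\sigma$ is not repaired by the homogeneity $D_p(\lambda u,\lambda v)=\lambda^p D_p(u,v)$; this is exactly why in the general case one obtains only the \emph{boundedly} Bregman regular conclusion of Proposition~\ref{coro:closedSum-bdlyBregmanReg} rather than the linear one.
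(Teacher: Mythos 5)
Your proof is correct and follows essentially the same route as the paper: both combine the metric linear regularity of $(M,N)$ from \cite[Corollary~4.5]{Bauschke1993} with the two-sided, globally valid comparison $c_1\dist(x,L)^2\leq D_2(\Pi_L^2x,x)\leq c_2\dist(x,L)^2$ supplied by Corollary~\ref{coro:bregman:sym:2-sc} via Lemma~\ref{lem:bregmanproj-metricproj:ineq}. Your remark on why the constants are global only in the $2$-convex, $2$-smooth case correctly identifies the point of the hypothesis.
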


\begin{proof} From~\cite[p. 200, Corollary 4.5.]{Bauschke1993}, we know that $(M,N)$ is linear regular. 
  We use Corollary \ref{coro:bregman:sym:2-sc} in combination with Lemma~\ref{lem:bregmanproj-metricproj:ineq} to conclude that
  \begin{align*}
    D_2(\Pi_{M\cap N}^2 x, x) &\underset{\eqref{eq:bregmanproj-metricproj:1}}{\leq} C_1 \dist(M\cap N,x) ^2 \\
                              &\leq C_1 \kappa^2 \max\{ \dist(M,x) ^2, \dist( N,x) ^2\}\\
                              &\underset{\eqref{eq:bregmanproj-metricproj:2}}{\leq} C_1 \kappa^2 \max\{C_2^2 D_2(\Pi_M^2 x, x), C_2^2 D_2(\Pi_N^2 x, x) \} \\
                              &=  C_1 \kappa^2 C_2^2 \max\{D_p(\Pi_M^2 x, x), D_2(\Pi_N^2 x, x)\},
  \end{align*}
  for suitable constants $C_1,C_2>0$.
\end{proof}

We conclude this section with two examples which show that the above conditions are not vacuous.

\begin{example}
  We consider the subspaces
  \[
    M_1 = \langle \begin{pmatrix}1\\0\\0\end{pmatrix}, \begin{pmatrix}0\\1\\0\end{pmatrix}\rangle \qquad\text{and}\qquad M_2 = \langle \begin{pmatrix}1\\0\\0\end{pmatrix}, \begin{pmatrix}0\\0\\1\end{pmatrix}\rangle
  \]
  of $\ell^{3}_q$, $1<q<\infty$, $q\neq 2$. Note that the intersection $M_1\cap M_2$ is spanned by the first standard basis vector $e_1$. A direct computation shows that
  \[
    \Pi^q_{M_1}\begin{pmatrix}x\\y\\z\end{pmatrix} = \begin{pmatrix}x\\y\\0\end{pmatrix}, \qquad     \Pi^q_{M_2}\begin{pmatrix}x\\y\\z\end{pmatrix} = \begin{pmatrix}x\\0\\z\end{pmatrix} \qquad\text{and}\qquad \Pi^q_{M_1\cap M_2}\begin{pmatrix}x\\y\\z\end{pmatrix} = \begin{pmatrix}x\\0\\0\end{pmatrix}.
  \]
  Setting
  \[
    v = \begin{pmatrix}x\\y\\z\end{pmatrix},
  \]
  from the above we obtain
  \[
    D_q(M_1,v) = \left(1-\frac{1}{q}\right) |z|^q, \qquad D_q(M_2,v) = \left(1-\frac{1}{q}\right) |y|^q
  \]
  and
  \[
    D_q(M_1\cap M_2, v) = \left(1-\frac{1}{q}\right) \left(|y|^q+|z|^q\right).
  \]
  Therefore,
  \[
    D_q(M_1\cap M_2, v) \leq 2 \max\{D_q(M_1,v), D_q(M_2,v)\}
  \]
  which means that $(M_1,M_2)$ is linear Bregman regular. A similar computation (with an additional factor) shows that it is also linear Bregman regular for $p\neq q$.
\end{example}

\begin{example}
    We consider the subspaces
  \[
    M_1 = \langle \begin{pmatrix}1\\0\\\frac{1}{2}\end{pmatrix}, \begin{pmatrix}1\\1\\\frac{99}{100}\end{pmatrix}\rangle \qquad\text{and}\qquad M_2 = \langle \begin{pmatrix}1\\0\\\frac{1}{2}\end{pmatrix}, \begin{pmatrix}1\\1\\\frac{101}{100}\end{pmatrix}\rangle
  \]
  of $\ell^{3}_3$. In addition, we use the points
  \[
    v_\lambda := (1-\lambda)\begin{pmatrix}1\\0\\\frac{1}{2}\end{pmatrix}+\lambda\begin{pmatrix}1\\1\\1\end{pmatrix} \qquad\text{and}\qquad v:=\begin{pmatrix}1\\0\\\frac{1}{2}\end{pmatrix}.
  \]
  We obtain
  \begin{equation}\label{eq:toIntersection1}
    D_{3}(tv, v_\lambda) = \frac{2 \lambda^{3}}{3} + \frac{3 t^{3}}{8} - t - \frac{\left(\frac{\lambda}{2} + \frac{1}{2}\right)^{3}}{3} - \left(\frac{\lambda}{2} + \frac{1}{2}\right)^{2} \left(- \frac{\lambda}{2} + \frac{t}{2} - \frac{1}{2}\right) + \frac{2}{3}
  \end{equation}
  and hence
  \[
    \frac{\mathrm{d}}{\mathrm{d}t} D_{3}(tv, v_\lambda) =\frac{9 t^{2}}{8} - \frac{\left(\frac{\lambda}{2} + \frac{1}{2}\right)^{2}}{2} - 1
  \]
  which vanishes for
  \[
    t = \frac{\sqrt{\lambda^{2} + 2 \lambda + 9}}{3}
  \]
  since we may assume $t\geq 0$. If we insert this value into~\eqref{eq:toIntersection1}, we obtain
  \begin{equation}\label{eq:toIntersection2}
    \begin{aligned}
      D_{3}(\Pi^{3}_{M_1\cap M_2}v_\lambda, v_\lambda) &= \frac{3 \lambda^{3}}{4} - \frac{\lambda^{2} \sqrt{\lambda^{2} + 2 \lambda + 9}}{36} + \frac{\lambda^{2}}{4} - \frac{\lambda \sqrt{\lambda^{2} + 2 \lambda + 9}}{18} + \frac{\lambda}{4}\\ & \qquad\qquad  - \frac{\sqrt{\lambda^{2} + 2 \lambda + 9}}{4} + \frac{3}{4}
    \end{aligned}
  \end{equation}
  In addition, setting
  \[
    w_\lambda = \left(1-\frac{50}{49}\lambda\right)  \begin{pmatrix}1\\0\\\frac{1}{2}\end{pmatrix} + \frac{50}{49}\lambda \begin{pmatrix}1\\1\\\frac{99}{100}\end{pmatrix} \in M_1
  \]
  we obtain
  \[
    D_{3}(\Pi^{3}_{M_1}v_\lambda,v_\lambda) \leq D_{3}(w_\lambda, v_\lambda) = \frac{148 \lambda^{3}}{352947}
  \]
  and similarly by setting
  \[
    u_\lambda =  \left(1-\frac{50}{51}\lambda\right) \begin{pmatrix}1\\0\\\frac{1}{2}\end{pmatrix} + \frac{50}{51} \lambda \begin{pmatrix}1\\1\\\frac{101}{100}\end{pmatrix} \in M_2
  \]
  we get
  \[
    D_{3}(\Pi^{3}_{M_2}v_\lambda,v_\lambda) \leq D_{3}(u_\lambda, v_\lambda) = \frac{152 \lambda^{3}}{397953}.
  \]
  From these inequalities, we finally may conclude that
  \begin{multline*}
    \frac{D_p(\Pi_{M_1\cap M_2}^pv_\lambda,v_\lambda)}{\max\{D_p(\Pi_{M_1}^pv_\lambda,v_\lambda), D_p(\Pi_{M_2}^pv_\lambda,v_\lambda)\}}\\ \geq \frac{117649 \left(27 \lambda^{3} - \lambda^{2} \sqrt{\lambda^{2} + 2 \lambda + 9} + 9 \lambda^{2} - 2 \lambda \sqrt{\lambda^{2} + 2 \lambda + 9} + 9 \lambda - 9 \sqrt{\lambda^{2} + 2 \lambda + 9} + 27\right)}{1776 \lambda^{3}}
  \end{multline*}
  which goes to infinity for $\lambda\to0$ as can be seen by using L'H\^{o}pital's rule twice. This shows that the pair $(M_1, M_2)$ is not linear Bregman regular.
\end{example}
\section{Convergence behaviour of iterated Bregman projections}

As an important tool for the convergence analysis of sequenes we use the following concept of Bregman monotone sequences.
\begin{definition}
  A sequence $(x_n)_{n=0}^\infty$ in $X$ is called \emph{Bregman monotone} with respect to a subset $C\subset X$, if
  \[
    D_p(z,x_k) \leq D_p(z,x_l) 
  \]
  for all $k\geq l$ and all $z\in C$.
\end{definition}

Note that if a sequence $(x_n)_{n=0}^{\infty}$ is Bregman monotone with respect to a set containing zero, in hence in particular with respect to a linear subspace, it has to be bounded since
\begin{equation}\label{eq:BregmanMonBounded}
  \left(1-\frac{1}{p}\right) \|x_n\|^p = D_p(0, x_n) \leq D_p(0,x_0)
\end{equation}
for all $n\in\mathbb{N}$. Observe that this argument also shows that in this case the norm of the elements of the sequence satisfies $\|x_n\|\leq \|x_0\|$ for all $n\in\mathbb{N}$.

The next proposition shows that in the case of Banach spaces which are uniformly convex and uniformly smooth, Bregman monotonicity with respect to a closed linear subspace $M$ implies convergence if the distance to the subspace $M$ converges to zero.

\begin{proposition}\label{prop:BregMonConv}
  Let $X$ be a uniformly convex and uniformly smooth Banach space and let $(x_n)_{n=0}^\infty$ be a sequence which is Bregman-monotone with respect to a closed linear subspace $M$ of $X$. If $D_p(M,x_n) \xrightarrow[n\to\infty]{}0$, the $(x_n)_{n=0}^\infty$ converges to a point in $M$.
\end{proposition}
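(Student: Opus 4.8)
The plan is to reduce the convergence of $(x_n)$ to the convergence of the projected sequence $y_n := \Pi_M^p x_n$, and to establish the latter by a one-sided Cauchy estimate. First I would record the two boundedness facts that make everything work. Since $(x_n)$ is Bregman monotone with respect to the subspace $M$, which contains $0$, inequality~\eqref{eq:BregmanMonBounded} gives $\|x_n\|\le\|x_0\|=:R$, and~\eqref{eq:BregmanNorm} yields $\|y_n\|\le\|x_n\|\le R$; thus both sequences lie in the ball of radius $R$. I would also note at the outset that, because $(x_n)$ is bounded and $D_p(y_n,x_n)=D_p(M,x_n)\to 0$, Proposition~\ref{prop:BregmanDistanceProperties}(iv) immediately gives $\|x_n-y_n\|\to 0$ (this is the content of Lemma~\ref{lem:BregToZeroDistToZero}). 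Consequently, once I know that $(y_n)$ converges to some $y$, I get $x_n\to y$ for free, and $y\in M$ since $M$ is closed.

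The heart of the argument is a forward contraction estimate for $(y_n)$. For $n\ge m$ I would apply the equality case of Proposition~\ref{prop:Bregman-SQNE} (valid because $M$ is a linear subspace) with the point $z=y_m\in M$ and the projection of $x_n$, obtaining $D_p(y_m,y_n)=D_p(y_m,x_n)-D_p(M,x_n)$. Since $y_m\in M$ and $(x_n)$ is Bregman monotone with respect to $M$, the term $D_p(y_m,x_n)$ is non-increasing in $n$, so for $n\ge m$ it is bounded by $D_p(y_m,x_m)=D_p(M,x_m)$. Dropping the non-negative term $D_p(M,x_n)$, this gives the clean bound
\[
  D_p(y_m,y_n)\le D_p(M,x_m)\qquad\text{for all } n\ge m.
\]
As $D_p(M,x_m)\to 0$ by hypothesis, the right-hand side tends to $0$ as $m\to\infty$, uniformly in $n\ge m$.

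I expect the main obstacle to be the asymmetry of the Bregman distance: the estimate above only controls $D_p(y_m,y_n)$ for $n\ge m$, while the reversed quantity $D_p(y_n,y_m)$ is genuinely not controlled by Bregman monotonicity, so one cannot directly invoke the Cauchy criterion of Proposition~\ref{prop:BregmanDistanceProperties}(iii). The way around this is to pass to the norm, which is symmetric. Applying Proposition~\ref{prop:BregmanDistanceProperties}(iv) to the bounded sequences appearing in the one-sided estimate---were the conclusion to fail, there would be $\varepsilon_0>0$ and pairs $(m_j,n_j)$ with $n_j\ge m_j\to\infty$ and $\|y_{m_j}-y_{n_j}\|\ge\varepsilon_0$, while $D_p(y_{m_j},y_{n_j})\le D_p(M,x_{m_j})\to 0$, contradicting Proposition~\ref{prop:BregmanDistanceProperties}(iv)---I conclude that $\sup_{n\ge m}\|y_m-y_n\|\to 0$ as $m\to\infty$. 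By symmetry of the norm this is exactly the Cauchy property of $(y_n)$, so $(y_n)$ converges to some $y\in M$. Combining this with $\|x_n-y_n\|\to 0$ from the first paragraph yields $x_n\to y\in M$, which finishes the proof.
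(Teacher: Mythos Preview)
Your proof is correct and takes a genuinely different route from the paper's. The paper works directly with $(x_n)$: for $n\ge m$ it expands $D_p(x_n,x_m)$ via the three-point identity~\eqref{eq:ThreePointIdentity} around the intermediate point $\Pi_M^p x_m$, and then drives each of the three resulting terms to zero using Bregman monotonicity, the annihilator characterisation of Proposition~\ref{prop:bregmanprojAnnihilator}, and the uniform continuity of $j_p$ on bounded sets; the asymmetry for the case $m>n$ is then handled, as in your argument, by passing through the norm via Proposition~\ref{prop:BregmanDistanceProperties}(iv). Your approach of projecting first and proving that $(y_n)=(\Pi_M^p x_n)$ is Cauchy is more economical: the equality case of Proposition~\ref{prop:Bregman-SQNE} gives the single estimate $D_p(y_m,y_n)\le D_p(M,x_m)$ without any use of the three-point identity or of properties of $j_p$, and the reduction from $(y_n)$ back to $(x_n)$ via $\|x_n-y_n\|\to 0$ is immediate. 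What the paper's route buys is the explicit control of $D_p(x^*,x_n)$ needed later in~\eqref{eq:ConvRateVer0} for the rate statements; your argument yields the same qualitative conclusion but would need one more line (e.g.\ the same three-point computation with $x^*$ in place of $x_n$) if that quantitative remark were required.
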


\begin{proof}
  Let us take a look at the case $n\geq m$. Recall that Bregman monotonicity implies that $(x_n)_{n=0}^\infty$ is bounded. Using the Bregman monotonicity and that the distance to $M$ is going to zero, we see that
  \begin{align} \label{eq:thm:bregmon:conv:1}
    D_p(\Pi_M^p x_m, x_n) \leq D_p(\Pi_M^p x_m, x_m) \xrightarrow[m\to\infty]{} 0 
  \end{align}
  which allows us to conclude that
  \[
    D_p(\Pi_M^p x_m, x_n) + D_p(\Pi_M^p x_m, x_m) \xrightarrow[m\to \infty]{} 0.
  \]
  We use the three-point identity 
  \begin{align}\label{eq:thm:bregmon:conv:2}
    D_p (x_n,x_m) = &D_p(x_n, \Pi_M^p x_m) + D_p(\Pi_M^p x_m, x_m) + \\
                    &\quad  +\langle j_p(\Pi_M^p x_m) - j_p(x_m), x_n - \Pi_M^p x_m \rangle \nonumber 
  \end{align}
  and observe that 
  \[
    |\langle j_p(\Pi_M^p x_m) - j_p(x_m), \Pi_M^p x_m - x_n \rangle| \leq \underbrace{\| x_n \|}_{\mathclap{\leq C \text{ constant}}} \cdot \underbrace{\|j_p(\Pi_M^p x_m) - j_p(x_m)\|}_{\xrightarrow[]{m\to\infty} 0} \to 0,
  \]
  since
  \[
    \langle j_p(\Pi_M^p x_m) - j_p(x_m), z\rangle = 0 
  \]
  for all $z\in M$. We are left to show that also the first summand converges to zero. Again using the Bregman monotonicity, observe that
  \[
    D_p(\Pi^p_{M}x_m,x_n) \leq D_p(\Pi^p_{M}x_m,x_m) \to 0
  \]
  for $m\to \infty$. Since the involded sequences are bounded, we may use the sequential consistency, i.e. assertion~(iv) of Proposition~\ref{prop:BregmanDistanceProperties}, together with assertion~(ii) of Proposition~\ref{prop:BregmanDistanceProperties} to conclude that
  also $D_p(x_n,\Pi^p_Mx_m)\to 0$ for $m\to\infty$. Hence, for all $\varepsilon>0$, we many pick an $N\in\mathbb{N}$ such that $D_p(x_n,x_m)<\varepsilon$ for all $n\geq m \geq N$.
  
  In order to be able to deduce that $(x_n)_{n=0}^{\infty}$ is a Cauchy sequence with respect to the Bregman distance, we need to show the corresponding inequality for the case $m>n$. For this case we again make use of assertions~(ii) and~(iv) of Proposition~\ref{prop:BregmanDistanceProperties} to exchange the arguments of the Bregman distance and to arrive at the first case again.

  This means that the sequence is a Cauchy sequence in the Bregman distance. Since by  Proposition~\ref{prop:BregmanDistanceProperties} the above implies that $(x_n)_{n=0}^{\infty}$ is a Cauchy sequence for the norm topology, it has a limit. The limit is contained in $\overline{M} = M$, since $M$ is a closed subspace and by Lemma~\ref{lem:BregToZeroDistToZero} the assumption $D_p(M,x_n)\to 0$ forces the (norm) distance $\operatorname{dist}(x_n,M)$ to converge to zero as well.
\end{proof}

\begin{remark}
  If the space $X$ is in addition $\rho$-convex and $\sigma$-smooth, the above proof together with Lemma~\ref{lem:bregmanproj-metricproj:ineq} gives us moreover some information on the convergence speed. In the situation above, denote by $x^*$ the limit of the sequence $(x_n)_{n=0}^{\infty}$, then using the three-point-identity we obtain
  \begin{equation}\label{eq:ConvRateVer0}
    \begin{aligned}
      D_p(x^*,x_n) =& D_p(x^*,\Pi_M^p x_n) + D_p(\Pi_M^p x_n, x_n) + \langle j_p(\Pi_M^p x_n) - j_p(x_n), x^* - \Pi_M^p x_n\rangle \\
      =& D_p(x^*,\Pi_M^p x_n) + D_p(\Pi_M^p x_n, x_n) \\
      \leq & C \, D_p(\Pi_M^p x_n, x^*)^\alpha + D_p(\Pi_M^p x_n, x_n) \\
      \leq & C\, D_p(\Pi_M^p x_n, x_n)^\alpha + D_p(\Pi_M^p x_n, x_n)
    \end{aligned}
  \end{equation}
  where the constants $C>0$ and $\alpha$ only depend on $X$ and on the initial point $x_0$.
  The second equality follows from the characterisation of the Bregman projection in terms of the annihilator of $M$, i.e. from the fact that
  \[
    \langle j_p(\Pi_M^p x_n) - j_p(x_n), z\rangle = 0
  \]
  for $z\in M$, see Proposition \ref{prop:bregmanprojAnnihilator}, and the fact that $x^*\in M$ and $\Pi_M^p x_n \in M$.
\end{remark}

Now we will apply these tools the problem of convergence of sequences generated by alternating Bregman projections. In order to do so, we first need to check that a sequence generated this way is Bregman monotone with respect to the intersection of the ranges.

\begin{lemma}\label{lem:altern-bregman:bregMon}
  Let $X$ be a uniformly convex and uniformly smooth Banach space and let $M,N\subset X$ be closed linear subspaces. For every $x\in X$ the sequence generated by
  \[
    x_0 = x, \qquad x_{2n+1} = \Pi_{M}^p x_{2n}, \qquad x_{2n} = \Pi_{N}^p x_{2n-1},
  \]
  is Bregman-monotone with respect to the closed linear subspace $M \cap N$.
\end{lemma}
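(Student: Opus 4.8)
The plan is to reduce the claim to a single application of the Bregman strong quasi-nonexpansivity stated in Proposition~\ref{prop:Bregman-SQNE}, combined with the fact that $M\cap N$ is contained in both $M$ and $N$. The key observation is that since $M\cap N$ is a closed linear subspace, every $z\in M\cap N$ lies in particular in $M$ and in $N$. Therefore Proposition~\ref{prop:Bregman-SQNE}, applied to the closed convex (indeed linear) set $M$ at the point $x_{2n}$, gives for each $z\in M\cap N\subset M$ the inequality
\[
  D_p(z,\Pi_M^p x_{2n}) \leq D_p(z,x_{2n}) - D_p(\Pi_M^p x_{2n}, x_{2n}) \leq D_p(z,x_{2n}),
\]
that is, $D_p(z,x_{2n+1})\leq D_p(z,x_{2n})$. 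The analogous application of the proposition to $N$ at the point $x_{2n-1}$ yields $D_p(z,x_{2n})\leq D_p(z,x_{2n-1})$ for every $z\in M\cap N\subset N$.

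First I would fix an arbitrary $z\in M\cap N$ and record these two one-step inequalities for every $n$; chaining them shows that the single sequence $\bigl(D_p(z,x_k)\bigr)_{k=0}^\infty$ is nonincreasing in $k$. Concretely, for any indices $k\geq l$ one obtains $D_p(z,x_k)\leq D_p(z,x_l)$ by composing the appropriate consecutive one-step estimates, regardless of the parity of the endpoints. Since $z\in M\cap N$ was arbitrary, this is precisely the definition of Bregman monotonicity with respect to the set $M\cap N$, which is what we must prove.

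The argument is essentially immediate once the correct set of test points is identified, so there is no genuine obstacle; the only thing to be careful about is the \emph{direction} of the containment. One must use $M\cap N\subseteq M$ and $M\cap N\subseteq N$ (so that points of the intersection are legitimate test points $z$ for each individual projection), rather than any relation in the opposite direction. With that in place, the two inequalities are valid for exactly the same $z$, and the monotonicity follows by telescoping. No uniform convexity or smoothness beyond what is needed for the well-definedness of $\Pi_M^p$ and $\Pi_N^p$ enters the proof, so the hypotheses on $X$ are used only implicitly through the existence of the Bregman projections.
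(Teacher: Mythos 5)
Your proposal is correct and follows essentially the same route as the paper: apply the Bregman strong quasi-nonexpansivity of Proposition~\ref{prop:Bregman-SQNE} to each projection step with test points $z\in M\cap N$ (which lie in both $M$ and $N$), drop the nonnegative term $D_p(\Pi^p_{\cdot}x_n,x_n)$, and chain the resulting one-step inequalities.
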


\begin{proof}
  Assume without loss of generality that $x_n \in N$. Then by Proposition~\ref{prop:Bregman-SQNE} for every $z \in M \cap N$ the inequality
  \[
    D_p(z, \Pi_{M}^p x_n) \leq D_p(z, x_n) - D_p(x_n, \Pi_{M}^p x_n)
  \]
  holds. Moreover, since $D_p(x_n, \Pi_{M}^p x_n) > 0$, we get
  \[
    D_p(z, \Pi_{M}^p x_n) \leq D_p(z, x_n).
  \]
  The same holds true, if we switch the roles of $M$ and $N$. Therefore the generated sequence $(x_n)_{n=0}^{\infty}$ is a Bregman-monotone sequence with respect to~$M \cap N$.
\end{proof}

\begin{proposition}\label{prop:altern-bregman:Conv}
  Let $X$ be a uniformly convex and uniformly smooth Banach space and $M,N\subset X$ be closed linear subspaces whose sum is closed. For every $x\in X$ the sequence generated by
  \[
    x_0 = x, \qquad x_{2n+1} = \Pi_{M}^p x_{2n}, \qquad x_{2n} = \Pi_{N}^p x_{2n-1},
  \]
  converges to $\Pi_{M\cap N}^p x$.
\end{proposition}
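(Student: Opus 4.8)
The plan is to combine the Bregman monotonicity established in Lemma~\ref{lem:altern-bregman:bregMon} with the bounded Bregman regularity of Proposition~\ref{coro:closedSum-bdlyBregmanReg} and the convergence criterion of Proposition~\ref{prop:BregMonConv}, and then to identify the limit via the annihilator characterisation of Proposition~\ref{prop:bregmanprojAnnihilator}. Concretely, I would first show that $D_p(M\cap N, x_n)\to 0$, which together with the already-known Bregman monotonicity yields convergence of $(x_n)_{n=0}^\infty$ to some $x^*\in M\cap N$; in a second step I would verify that this $x^*$ equals $\Pi_{M\cap N}^p x$.

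For the first step, fix any $z\in M\cap N$. Since $M$ and $N$ are linear subspaces, the equality version of Proposition~\ref{prop:Bregman-SQNE} applies at each projection step, giving
\[
  D_p(z, x_{2n+1}) = D_p(z, x_{2n}) - D_p(\Pi_M^p x_{2n}, x_{2n}) = D_p(z, x_{2n}) - D_p(M, x_{2n})
\]
and analogously $D_p(z, x_{2n}) = D_p(z, x_{2n-1}) - D_p(N, x_{2n-1})$. Hence the scalar sequence $\{D_p(z, x_n)\}_{n=0}^\infty$ is non-increasing and bounded below by zero, so it converges and its consecutive differences tend to zero. As these differences are precisely the nonnegative numbers $D_p(M, x_{2n})$ and $D_p(N, x_{2n-1})$, we obtain $D_p(M, x_{2n})\to 0$ and $D_p(N, x_{2n-1})\to 0$. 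Because $x_{2n}\in N$ and $x_{2n+1}\in M$, the remaining quantities $D_p(N, x_{2n})$ and $D_p(M, x_{2n+1})$ vanish identically, so altogether $\max\{D_p(M, x_n), D_p(N, x_n)\}\to 0$. The sequence is bounded by Bregman monotonicity with respect to $M\cap N\ni 0$, so it lies in a fixed bounded set; bounded Bregman regularity (Proposition~\ref{coro:closedSum-bdlyBregmanReg}) then forces $D_p(M\cap N, x_n)\to 0$, and Proposition~\ref{prop:BregMonConv} provides a limit $x^*\in M\cap N$.

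It remains to identify $x^*$ with $\Pi_{M\cap N}^p x$. By Proposition~\ref{prop:bregmanprojAnnihilator} it suffices to check that $x^*\in M\cap N$, which we already have, and that $j_p(x^*) - j_p(x)\in (M\cap N)^\perp$. At each step Proposition~\ref{prop:bregmanprojAnnihilator} gives $j_p(x_{2n+1}) - j_p(x_{2n})\in M^\perp$ and $j_p(x_{2n}) - j_p(x_{2n-1})\in N^\perp$; pairing these with an arbitrary $z\in M\cap N$, which lies in both $M$ and $N$, makes each contribution vanish, so $\langle j_p(x_{n+1}) - j_p(x_n), z\rangle = 0$ for every $n$. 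Telescoping yields $\langle j_p(x_n) - j_p(x), z\rangle = 0$ for all $n$, and letting $n\to\infty$, using the continuity of $j_p$ (the norm is Fr\'echet differentiable since $X$ is uniformly smooth), gives $\langle j_p(x^*) - j_p(x), z\rangle = 0$. As $z\in M\cap N$ was arbitrary, $j_p(x^*) - j_p(x)\in (M\cap N)^\perp$, and Proposition~\ref{prop:bregmanprojAnnihilator} identifies $x^* = \Pi_{M\cap N}^p x$.

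The main obstacle I anticipate is the first step: one must extract genuine decay of $D_p(M, x_n)$ and $D_p(N, x_n)$ from the mere monotonicity of $D_p(z,\cdot)$ along the orbit. The telescoping argument above handles this, but it crucially uses the \emph{equality} (rather than inequality) in Proposition~\ref{prop:Bregman-SQNE} available for linear subspaces; without it one would only get summability-type control, which is harder to turn into the two-sided smallness of $\max\{D_p(M,x_n),D_p(N,x_n)\}$ required to invoke bounded Bregman regularity. The identification of the limit is then routine, resting only on the annihilator characterisation and the norm-to-norm continuity of the duality map.
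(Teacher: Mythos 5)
Your proof is correct and its overall architecture coincides with the paper's: Bregman monotonicity from Lemma~\ref{lem:altern-bregman:bregMon}, decay of $\max\{D_p(M,x_n),D_p(N,x_n)\}$, bounded Bregman regularity of $(M,N)$ (Proposition~\ref{coro:closedSum-bdlyBregmanReg}) to pass to $D_p(M\cap N,x_n)\to 0$, Proposition~\ref{prop:BregMonConv} for convergence to some $x^*\in M\cap N$, and the telescoping annihilator argument via Proposition~\ref{prop:bregmanprojAnnihilator} to identify $x^*=\Pi^p_{M\cap N}x$. The one step where you genuinely deviate is the proof that $D_p(M,x_n)$ and $D_p(N,x_n)$ tend to zero: you fix a single $z\in M\cap N$, note that the equality case of Proposition~\ref{prop:Bregman-SQNE} makes $D_p(z,x_n)$ a non-increasing, bounded-below (hence convergent) scalar sequence whose successive decrements are exactly $D_p(M,x_{2n})$ and $D_p(N,x_{2n-1})$, and conclude that these decrements vanish. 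The paper instead argues by contradiction, assuming $D_p(M\cap N,x_n)\to\varepsilon>0$ and working with the moving point $z=\Pi^p_{M\cap N}x_n$ together with the minimality of $\Pi^p_{M\cap N}x_{n+1}$ to force $D_p(N,x_n)\to 0$ anyway. Your fixed-$z$ telescoping is more direct and dispenses with the contradiction framing (which is in fact dispensable in the paper's version too, since $D_p(M\cap N,x_n)$ is monotone and therefore convergent whatever its limit); both variants rest on the same equality in Proposition~\ref{prop:Bregman-SQNE}, so nothing is lost in generality. The only cosmetic caveat is that $x_0=x$ need not lie in $N$, so the identity $D_p(N,x_{2n})=0$ you invoke holds for $n\geq 1$ only; this single term is irrelevant for the limit.
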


\begin{proof}
  The main idea of the proof is to use Proposition~\ref{prop:BregMonConv} and to show that the distance to the intersection~$M\cap N$ is going to zero. Since Lemma~\ref{lem:altern-bregman:bregMon} shows that the generated sequence is Bregman-monotone we are left to show that the Bregman distance to the intersection is going to zero. We prove this by contradiction. Without loss of generality let $x_n \in M$. Assume the Bregman distance to the intersection is not going to zero, since $D_p(M\cap N, x_n)$ is a decreasing sequence, there is an $\varepsilon > 0$, such that
  \[
    D_p(M\cap N, x_n) \xrightarrow[n\to\infty]{} \varepsilon.
  \]
  Recall that by Proposition~\ref{prop:Bregman-SQNE} we obtain
  \[
    D_p(z, \Pi_N x_n) = D_p(z, x_n) - D_p(\Pi_N^p x_n, x_n)
  \]
  for all $z\in N$. Rearranging the terms and using that $x_{n+1} = \Pi_{N}x_n$, we obtain that
  \[
    D_p(\Pi_N x_n, x_n) = D_p(z, x_n)- D_p(z, x_{n+1})
  \]
  for all $z \in M\cap N$.
  We set $z = \Pi_{M\cap N}^p x_n$ and observe that since $\Pi_{M\cap N}x_n$ minimises the Bregman distance to $x_n$ we have
  \[
    D_p(\Pi_{M\cap N}^p x_n, x_n)- D_p(\Pi_{M\cap N}^p x_n, x_{n+1}) \leq D_p(\Pi_{M\cap N}^p x_n, x_n)- D_p(\Pi_{M\cap N}^p x_{n+1}, x_{n+1})
  \]
  which implies
  \[
    D_p(\Pi_N x_n, x_n) \leq D_p(\Pi_{M\cap N}^p x_n, x_n)- D_p(\Pi_{M\cap N}^p x_{n+1}, x_{n+1}).
  \]
  Since both terms on the right hand side converge to $\varepsilon$, we obtain that $D_p(N,x_n)$ has to converge to zero. By the same reasoning, with switched roles of $M$ and $N$, we get that also $D_p(M,x_n)\to 0$. Now recall that by Proposition~\ref{coro:closedSum-bdlyBregmanReg} the assumption that $M+N$ is closed implies that $(M,N)$ is boundedly Bregman regular. This means that
  \[
    \lim_{n\to\infty} \max\{D_p(M,x_n), D_p(N,x_n)\} = 0 \Rightarrow \lim_{n\to\infty} D_p(M\cap N, x_n) = 0,
  \]
  which contradicts $D_p(M\cap N, x_n) \xrightarrow[n\to\infty]{} \varepsilon > 0$. So we have that \[D_p(M\cap N, x_n) \xrightarrow[n\to\infty]{} 0.\]
  Hence we are able to apply Proposition~\ref{prop:BregMonConv} to conclude convergence of the sequence.

  In order to finish the proof, we are left to show that its limit is the Bregman projection of $x_0=x$ onto the intersection $M \cap N$. In order to do so, denote this limit by $x^*$ and observe that
  \begin{align*}
    |\langle j_p(x^*) - j_p(x), z-x^*\rangle| =& \left|\left\langle j_p(x^*) - j_p(x_N) - \sum_{k=1}^N j_p(x_k) - j_p(x_{k-1}),z-x^*\right\rangle\right| \\
    =& \langle j_p(x^*)-j_p(x_N), z-x^*\rangle \xrightarrow[]{N\to\infty} 0,
  \end{align*}
  where we used the characterisation of the Bregman projection onto $M$ by
  \[
    \langle j_p(\Pi_M^p x) - j_p(x), z \rangle = 0,
  \]
  for all $z\in M$, see Proposition~\ref{prop:bregmanprojAnnihilator}. Hence
  \[
    \langle j_p(x^*) - j_p(x), z-x^*\rangle = 0
  \]
  for all $z\in M\cap N$, which implies that $x^*=\Pi^p_{M\cap N}x$ again by Proposition~\ref{prop:bregmanprojAnnihilator}
\end{proof}

The assumption that $M+N$ should be a closed subspace was used to obtain that the pair $(M,N)$ was boundedly Bregman regular. If we require $(M,N)$ to satisfy stronger regularity conditions, we are able to obtain some information on the rate of convergence of the method of alternating Bregman projections.

\begin{theorem}\label{thm:bregmanproj:convrate}
  Let $X$ be uniformly convex and uniformly smooth Banach space. Furthermore let $M,N$ be closed linear subspaces such that $(M,N)$ is (boundedly) linear Bregman regular. Then for every $x\in X$ the sequence defined by
  \[
    x_0 := x, \qquad x_{2n+1} := \Pi^p_{M}x_{2n}\quad\text{and}\quad x_{2n} := \Pi^p_{N}x_{2n-1}
  \]
  converges to $\Pi_{M\cap N}^px$. If in $X$ is an addition $\rho$-convex and $\sigma$-smooth and~$p$ such that $\sigma \leq p \leq \rho$, there are constants $C>0$ and $q\in (0,1)$ such that the sequence satisfies
  \begin{equation}
    D_p(\Pi^p_{M\cap N}x,x_n) \leq C q^n 
  \end{equation}
  and therefore also
  \begin{equation}
    \left\|\Pi^p_{M\cap N}x-x_n\right\| \leq C' (q')^{n}
  \end{equation}
  for some other constants $C'>0$ and $q'\in (0,1)$, that is, it converges linearly.
\end{theorem}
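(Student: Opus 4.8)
The plan is to reduce the quantity $D_p(\Pi_{M\cap N}^p x, x_n)$ appearing in the statement to the \emph{moving} Bregman distance $d_n := D_p(M\cap N, x_n) = D_p(\Pi_{M\cap N}^p x_n, x_n)$, and then to extract a geometric recursion for $d_n$ from the equality case of Proposition~\ref{prop:Bregman-SQNE} combined with linear Bregman regularity.

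The first and decisive step is to observe that the Bregman projection of every iterate onto the intersection is constant, namely $\Pi_{M\cap N}^p x_n = \Pi_{M\cap N}^p x =: w$ for all $n$. Indeed, by telescoping $j_p(x_n) - j_p(x) = \sum_{k=1}^n \bigl(j_p(x_k) - j_p(x_{k-1})\bigr)$, and by Proposition~\ref{prop:bregmanprojAnnihilator} each increment lies in $M^\perp$ or in $N^\perp$; since both of these are contained in $(M\cap N)^\perp$, we get $j_p(x_n) - j_p(x) \in (M\cap N)^\perp$. As also $j_p(w) - j_p(x) \in (M\cap N)^\perp$ and $w\in M\cap N$, it follows that $j_p(w) - j_p(x_n) \in (M\cap N)^\perp$, whence $w = \Pi_{M\cap N}^p x_n$ by Proposition~\ref{prop:bregmanprojAnnihilator}. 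Consequently $D_p(\Pi_{M\cap N}^p x, x_n) = D_p(w, x_n) = d_n$, so $d_n$ is exactly the quantity to be estimated; moreover, once $d_n\to 0$ is established, Proposition~\ref{prop:BregmanDistanceProperties}(ii) gives $x_n\to w = \Pi_{M\cap N}^p x$, so the convergence assertion comes essentially for free.

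Next I would establish the geometric decay of $d_n$. By Proposition~\ref{prop:bregman:regularity} the (bounded) linear Bregman regularity furnishes a constant $\kappa\geq 1$ with $d_n \leq \kappa \max\{D_p(M,x_n), D_p(N,x_n)\}$; here I use that the whole orbit stays in the ball $\{y\in X\colon \|y\|\leq \|x_0\|\}$ by~\eqref{eq:BregmanMonBounded} and Lemma~\ref{lem:altern-bregman:bregMon}, which also pins down the regularity constant in the merely bounded case. For $n\geq 1$ the iterate $x_n$ already lies in one of the two subspaces, so one term of the maximum vanishes and $\max\{D_p(M,x_n), D_p(N,x_n)\} = D_p(x_{n+1}, x_n)$. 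Applying the equality case of Proposition~\ref{prop:Bregman-SQNE} with $z = w$ (admissible, since $w\in M\cap N$ lies in whichever subspace $x_{n+1}$ is projected onto) gives $D_p(x_{n+1}, x_n) = D_p(w, x_n) - D_p(w, x_{n+1}) = d_n - d_{n+1}$. Combining these relations yields $d_n \leq \kappa(d_n - d_{n+1})$, hence $d_{n+1} \leq (1-\tfrac{1}{\kappa})\,d_n$, and therefore $d_n \leq C q^n$ with $q := 1-\tfrac{1}{\kappa}\in[0,1)$ (replace $q$ by any element of $(0,1)$ if $\kappa=1$). Since $d_n = D_p(\Pi_{M\cap N}^p x, x_n)$, this is precisely the first estimate.

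For the norm estimate under the additional power-type assumptions, I would invoke inequality~\eqref{eq:bregman:sym:1} of Proposition~\ref{coro:bregman:sym} with $R = \|x_0\|+1$, which is legitimate because $\|w\|\leq \|x\|$ by~\eqref{eq:BregmanNorm} and $\|x_n\|\leq \|x_0\|$; this gives $\|w - x_n\|^\rho \leq C_\rho^{-1} D_p(w, x_n) = C_\rho^{-1} d_n \leq C_\rho^{-1} C q^n$, so $\|\Pi_{M\cap N}^p x - x_n\| \leq C'(q')^n$ with $q' := q^{1/\rho}\in(0,1)$. The main obstacle is really the first step: the theorem asks for the distance to the \emph{fixed} limit $\Pi_{M\cap N}^p x$, whereas the SQNE/telescoping machinery natively controls the distance of $x_n$ to the \emph{subspace} $M\cap N$; the observation that these coincide (because the intersection projection of every iterate already equals the limit) is what makes the recursion on $d_n$ transfer directly to the desired estimate, avoiding the $\alpha$-power correction term of the remark following Proposition~\ref{prop:BregMonConv}. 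Two minor points to handle with care are that the recursion only starts at $n=1$, since $x_0=x$ need not lie in $M$ or $N$, and that in the purely bounded version of the hypothesis the constant $\kappa$ must be chosen for the ball containing the orbit.
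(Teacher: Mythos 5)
Your proof is correct, and while the core recursion --- combining Proposition~\ref{prop:Bregman-SQNE} with the (boundedly) linear Bregman regularity constant $\kappa$ chosen for the ball of radius $\|x_0\|$ to obtain $D_p(M\cap N,x_{n+1})\leq\bigl(1-\tfrac{1}{\kappa}\bigr)D_p(M\cap N,x_n)$ --- is exactly the paper's, your treatment of the remaining step is genuinely different and in fact sharper. The paper converts the decay of the moving quantity $D_p(\Pi^p_{M\cap N}x_n,x_n)$ into decay of $D_p(\Pi^p_{M\cap N}x,x_n)$ by passing through the limit point and the three-point identity, i.e.\ through the estimate~\eqref{eq:ConvRateVer0}; this costs an exponent $\alpha=\sigma/\rho$ coming from Proposition~\ref{coro:bregman:sym}, so the power-type hypotheses enter already in the Bregman-distance estimate, and the limit must be identified separately via Propositions~\ref{prop:BregMonConv} and~\ref{prop:altern-bregman:Conv}. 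You instead observe, by telescoping the increments $j_p(x_k)-j_p(x_{k-1})\in M^\perp\cup N^\perp\subset(M\cap N)^\perp$ and applying Proposition~\ref{prop:bregmanprojAnnihilator}, that $\Pi^p_{M\cap N}x_n=\Pi^p_{M\cap N}x$ for every $n$ --- an argument the paper deploys only asymptotically, inside the proof of Proposition~\ref{prop:altern-bregman:Conv} --- so the moving and the fixed quantities coincide exactly. This buys you three things: the cleaner rate $q=1-\tfrac{1}{\kappa}$ instead of $\bigl(1-\tfrac{1}{\kappa}\bigr)^{\sigma/\rho}$; validity of the Bregman-distance estimate already in the merely uniformly convex and uniformly smooth setting, the power-type assumptions being needed only for the passage to the norm via~\eqref{eq:bregman:sym:1} (which you correctly invoke, where the paper cites~\eqref{eq:bregman:sym:2}); and a direct identification of the limit for free. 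Your handling of the minor points --- starting the recursion at $n=1$, the degenerate case $\kappa=1$, and fixing the regularity constant on the orbit ball --- is also in order.
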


\begin{proof}
  For a given $n\in \mathbb{N}$ we may assume without loss of generality that $x_n\in N$ and therefore $x_{n+1} = \Pi_M^p x_n$. Since by Proposition~\ref{prop:Bregman-SQNE} the Bregman projections onto $M$ and onto $N$ are Bregman strongly quasi-nonexpansive,  we have
  \begin{align*}
    D_p(z, \Pi_M^p x) \leq D_p(z,x) - D_p(\Pi_M^p x, x) \qquad \forall z \in M \\
    D_p(z, \Pi_N^p x) \leq D_p(z,x) - D_p(\Pi_N^p x, x) \qquad \forall z \in N
  \end{align*}
  Setting $z=\Pi^{p}_{M\cap N}x_{n}$ and $x=x_n$ and since $\Pi_{M\cap N}^p x_{n+1}$ minimizes the Bregman distance to $x_{n+1}$ we obtain
  \begin{align} \label{eq:bregman:convrate:1}
    D_p(\Pi_{M\cap N}^p x_n,x_{n+1}) &\leq D_p(\Pi_{M\cap N}^p x_n,x_n) - D_p(\Pi_M^p x_n, x_n)\\
    D_p(\Pi_{M\cap N}^p x_n,x_{n+1}) &\geq D_p(\Pi_{M\cap N}^p x_{n+1}, x_{n+1}). \nonumber
  \end{align}
  We denote by $\kappa$ the constant in the inequality for the bounded linear Bregman regularity of $(M, N)$ over a suitably large ball containing all elements of the sequence~$(x_n)_{n=0}^{\infty}$. Note that by~\eqref{eq:BregmanMonBounded} it is sufficient to take a ball of radius $\|x_0\|$ around zero. Since $D_p(\Pi_M^p x_n,x_n) > D_p(\Pi_N^p x_n, x_n) = 0$, using the linear Bregman regularity, we may deduce that
  \[
    D_p(\Pi_{M\cap N}^p x_n, x_n) \leq \kappa \cdot D_p(\Pi_M^p x_n, x_n),
  \]
  which gives us in return
  \[
    D_p(\Pi_M^p x_n, x_n)\geq \frac{1}{\kappa} D_p(\Pi_{M\cap N}^p x_n, x_n).
  \]
  Observe that $\kappa \geq 1$, since $M\cap N \subset M$.
  Using this in \eqref{eq:bregman:convrate:1} we obtain
  \[
    D_p(\Pi_{M\cap N}^p x_{n+1}, x_{n+1}) \leq \left(1-\frac{1}{\kappa}\right) D_p(\Pi_{M\cap N}^p x_{n}, x_{n}).
  \]
  Now using this argument inductively we may conclude that
  \[
    D_p(\Pi_{M\cap N}^p x_{n+1}, x_{n+1}) \leq \left( 1-C\right)^{n+1} D_p(\Pi_{M\cap N}^p x_0, x_0).
  \]
  Using that $x_n$ is a Bregman regular sequence since the Bregman projection is strongly Bregman quasi-nonexpansive, we use Proposition~\ref{prop:BregMonConv} to conclude that $x_n$ converges to some $x^*\in M\cap N$. Since the assumptions of Proposition~\ref{prop:altern-bregman:Conv} are satisfied, we may conclude that the limit point~$x^*$ is indeed the Bregman projection of $x$ onto $M\cap N$.

  We are left to show that quantitative part for the case where $X$ is convex and smooth of power type. Picking $n$ sufficiently large so that the above expression is at most one, we may plug it into~\eqref{eq:ConvRateVer0}\  to obtain 
  \[
    D_p(\Pi_{M\cap N}^px, x_n) \leq 2 C' D_p(\Pi_{M\cap N}^px_n,x_n)^\alpha \leq C q^n 
  \]
  where
  \[
    q:= (1-C)^\alpha\in (0,1)\qquad\text{and}\qquad C = 2C' D_p(\Pi_{M\cap N}^p x_0, x_0)^\alpha
  \]
  which proves the first claim. Combining the above inequality with~\eqref{eq:bregman:sym:2} yields the second claim.  
\end{proof}

Since in $2$-convex and $2$-smooth Banach spaces, the condition that $M+N$ is closed already implies that $(M,N)$ is linear Bregman regular, we obtain the following corollary.

\begin{corollary}\label{thm:bregmanproj:convrate-2-sc}
  Let $X$ be a $2$-smooth and $2$-convex Banach space and let $M,N\subset X$ be closed linear subspaces, such that $M+N$ is closed. Then for every $x\in X$ the sequence defined by
  \[
    x_0 := x, \qquad x_{2n+1} := \Pi^2_{M}x_{2n}\quad\text{and}\quad x_{2n} := \Pi^2_{N}x_{2n-1}
  \]
  converges to $\Pi_{M\cap N}^2x$. Moreover there are constants $C>0$ and $q\in (0,1)$ such that the sequence satisfies
  \begin{equation}
    D_2(\Pi^2_{M\cap N}x,x_n) \leq C q^n 
  \end{equation}
  and therefore also
  \begin{equation}
    \left\|\Pi^2_{M\cap N}x-x_n\right\| \leq C' (q')^{n}
  \end{equation}
  for some other constants $C'>0$ and $q'\in (0,1)$, i.e. the sequence converges linearly.
\end{corollary}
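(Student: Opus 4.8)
The plan is to recognise this corollary as the specialisation of Theorem~\ref{thm:bregmanproj:convrate} to the setting $\rho=\sigma=p=2$, so that essentially no new work is required beyond verifying that the hypotheses of that theorem are satisfied. First I would fix the parameters $\rho=\sigma=p=2$; since $X$ is assumed to be $2$-convex and $2$-smooth, this is a legitimate choice of exponent, and the chain of inequalities $\sigma\leq p\leq\rho$ demanded in the quantitative part of Theorem~\ref{thm:bregmanproj:convrate} holds trivially, with equality throughout.

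The single substantive point to check is the linear Bregman regularity of the pair $(M,N)$, which is the standing hypothesis of Theorem~\ref{thm:bregmanproj:convrate}. This is precisely the content of Proposition~\ref{prop:lin-Bregman-reg-2sc}: because $X$ is $2$-convex and $2$-smooth and the sum $M+N$ is closed, the pair $(M,N)$ is linear Bregman regular, i.e.\ there is a constant $\kappa>0$ with $D_2(\Pi_{M\cap N}^2 x, x)\leq\kappa\max\{D_2(\Pi_M^2 x, x), D_2(\Pi_N^2 x, x)\}$ for all $x\in X$. With this in hand, every hypothesis of Theorem~\ref{thm:bregmanproj:convrate} is met, and I would simply invoke it to obtain both the convergence of the alternating Bregman sequence to $\Pi_{M\cap N}^2 x$ and the geometric decay $D_2(\Pi_{M\cap N}^2 x, x_n)\leq Cq^n$ for suitable $C>0$ and $q\in(0,1)$.

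For the accompanying norm estimate, I would pass from the Bregman bound to a norm bound using the global lower inequality of Corollary~\ref{coro:bregman:sym:2-sc}, namely $D_2(u,v)\geq C'\|u-v\|^2$ valid for all $u,v\in X$. Applying this with $u=\Pi_{M\cap N}^2 x$ and $v=x_n$ gives $C'\|\Pi_{M\cap N}^2 x - x_n\|^2\leq D_2(\Pi_{M\cap N}^2 x, x_n)\leq Cq^n$, whence $\|\Pi_{M\cap N}^2 x - x_n\|\leq C''(q^{1/2})^n$, which is the claimed linear rate with $q'=q^{1/2}\in(0,1)$. I would note that, because the constants in Corollary~\ref{coro:bregman:sym:2-sc} are independent of the radius $R$ in the $2$-$2$ case, these estimates hold globally rather than merely on a fixed ball, so no restriction to a bounded set is needed in this step.

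The proof therefore presents no genuine obstacle: the difficulty has already been absorbed into Proposition~\ref{prop:lin-Bregman-reg-2sc}, which certifies linear Bregman regularity from the mere closedness of $M+N$ — a phenomenon special to spaces isomorphic to a Hilbert space — and into Theorem~\ref{thm:bregmanproj:convrate}, which converts that regularity into a linear convergence rate. The corollary is essentially the bookkeeping that combines these two results.
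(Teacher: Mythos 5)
Your proposal is correct and follows the paper's own proof exactly: specialise to $\rho=\sigma=p=2$, invoke Proposition~\ref{prop:lin-Bregman-reg-2sc} to obtain linear Bregman regularity of $(M,N)$ from the closedness of $M+N$, and then apply Theorem~\ref{thm:bregmanproj:convrate}. Your added derivation of the norm estimate via the lower bound $D_2(u,v)\geq C'\|u-v\|^2$ is the right way to pass from the Bregman decay to the norm decay (indeed more carefully stated than the paper's one-line reference), so there is nothing to correct.
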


\begin{proof}
  Since we have $\sigma = p = \rho = 2$ we use Proposition~\ref{prop:lin-Bregman-reg-2sc} to obtain that $(M,N)$ is linear Bregman regular with constant $\kappa >0$. Now Theorem~\ref{thm:bregmanproj:convrate} implies the claimed statement.
\end{proof}

\section{Convergence properties of the alternating algorithm}

The main tool which we will be using in this section is the following decoposition theorem which is due to Y.~Alber.

\begin{theorem}\label{thm:alber:decomp}
  Let $X$ be a uniformly convex and uniformly smooth Banach space and let $M\subset X$ be a closed linear subspace. Morover let $1<p<\infty$ and $\frac{1}{p}+\frac{1}{p^*}=1$. Then,
  \[
    x = P_M x + j_{p^*}\left( \Pi_{M^\perp}^{p^*} j_p(x)\right).
  \]
  for every $x\in X$.
\end{theorem}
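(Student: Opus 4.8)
The plan is to reduce the identity to the annihilator characterisation of the Bregman projection (Proposition~\ref{prop:bregmanprojAnnihilator}), applied once in the dual space $X^*$, together with the analogous variational characterisation of the metric projection. Write $\xi := j_p(x) \in X^*$, let $\eta := \Pi_{M^\perp}^{p^*}\xi$ be the Bregman projection of $\xi$ onto the closed subspace $M^\perp \subset X^*$, and set $v := j_{p^*}(\eta)$, so that the claimed identity reads $x = P_M x + v$. First I would record the two facts that make the bookkeeping work: since $X$ is uniformly convex and uniformly smooth it is reflexive, so $X^{**}$ is identified with $X$, the duality map of $X^*$ for the exponent $p^*$ is exactly $j_{p^*}\colon X^*\to X$ (the inverse of $j_p$), and $(M^\perp)^\perp = M$ for the closed subspace $M$.

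Next I would apply Proposition~\ref{prop:bregmanprojAnnihilator} in the space $X^*$ with exponent $p^*$, to the subspace $M^\perp$ and the point $\xi$. This characterises $\eta = \Pi_{M^\perp}^{p^*}\xi$ by the two conditions $\eta \in M^\perp$ and $j_{p^*}(\eta) - j_{p^*}(\xi) \in (M^\perp)^\perp$. Using $j_{p^*}(\xi) = j_{p^*}(j_p(x)) = x$ and $(M^\perp)^\perp = M$, the second condition becomes $v - x \in M$, i.e.\ $x - v \in M$; in particular $x-v$ is an admissible competitor for the best approximation of $x$ in $M$.

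It then remains to identify $x - v$ with $P_M x$. For this I would use the variational (Kolmogorov-type) characterisation of the metric projection in a smooth space: for $y \in M$ one has $y = P_M x$ if and only if $j_p(x - y) \in M^\perp$. This follows from convexity and Fr\'echet differentiability of $m \mapsto \tfrac{1}{p}\|x-m\|^p$, whose G\^ateaux derivative in a direction $h \in M$ is $-\langle j_p(x-m), h\rangle$ and must vanish on $M$ at the minimiser. Applying this with $y = x - v$, the required condition is $j_p(v) \in M^\perp$; but $j_p(v) = j_p(j_{p^*}(\eta)) = \eta$, and $\eta \in M^\perp$ was already established. Hence $x - v = P_M x$, which rearranges to the claimed decomposition.

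The steps are individually short; the only points demanding care are the reflexive identifications --- that the duality map on $X^*$ really is the inverse $j_{p^*}$ of $j_p$ and lands in $X = X^{**}$, and that the annihilator $(M^\perp)^\perp$, a priori a subset of $X^{**}$, coincides with $M$ under this identification --- together with making explicit the variational characterisation of the metric projection, which is not isolated earlier in the text. I expect the latter to be the main thing to pin down cleanly, since everything else is a direct transcription of Proposition~\ref{prop:bregmanprojAnnihilator} across the duality $X \leftrightarrow X^*$.
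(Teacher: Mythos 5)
Your argument is correct, but it is worth noting that the paper does not prove this statement at all: it simply cites Theorem~1.3 of Alber's paper \emph{James orthogonality and orthogonal decompositions of Banach spaces} (reference \cite{Alber2005} in the bibliography). What you have written is a genuine, self-contained derivation, and all the steps check out: the dual space of a uniformly convex and uniformly smooth space is again such a space, so $\Pi_{M^\perp}^{p^*}$ is well defined; Proposition~\ref{prop:bregmanprojAnnihilator} applied in $X^*$ gives $\eta\in M^\perp$ and $v-x=j_{p^*}(\eta)-j_{p^*}(\xi)\in(M^\perp)^\perp=M$ under the reflexive identification; and the Kolmogorov-type criterion $y=P_Mx\iff y\in M,\ j_p(x-y)\in M^\perp$, which you correctly flag as the one ingredient not isolated in the text, follows exactly as you say from convexity and differentiability of $m\mapsto\tfrac1p\|x-m\|^p$ (together with existence and uniqueness of $P_Mx$, which hold since $X$ is reflexive and strictly convex). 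Applying it to $y=x-v$ and using $j_p(v)=\eta\in M^\perp$ closes the argument. This is essentially the route Alber himself takes, so your proof buys a self-contained presentation at the cost of spelling out the variational characterisation of the metric projection; if you wanted to include it in the paper you would state that characterisation as a separate lemma.
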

\begin{proof}
  See Theorem~1.3 in~\cite[p.~332]{Alber2005}.
\end{proof}

Rearranging the terms in the decomposition given by the theorem above, we arrive at
\begin{equation}\label{eq:BregmanMetric}
  (I-P_M)x = j_{p^*}(\Pi^{p^*}_{M^\perp}j_p(x)).
\end{equation}
Using the fact that the duality mappings $j_p$ and $j_{p^*}$ are inverse to each other, we may conclude that
\[
  (I-P_{N})(I-P_M)x = j_{p^*}(\Pi^{p^*}_{N^\perp}\Pi^{p^*}_{M^\perp}j_p(x)).
\]
for every pair $(M,N)$ of closed linear subspaces.

In order to use the results of the previous section for the problem of convergence of the alternating algorithm, we need the following well-known connection between the closedness of the subspace $M+N$ and of $M^\perp+N^\perp$.

\begin{proposition}\label{prop:ClosedSumEquivalence}
  Let $X$ be a Banach space and $M,N\subset X$ be closed linear subspaces. The sum $M+N$ is closed if and only if $M^\perp+N^\perp$ is closed.
\end{proposition}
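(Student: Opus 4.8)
The plan is to reduce the statement to Banach's closed range theorem applied to a single operator built from the two quotient maps. Concretely, equip $(X/M)\oplus(X/N)$ with the maximum norm and consider the bounded operator
\[
  Q\colon X \to (X/M)\oplus(X/N), \qquad Qx = (x+M,\,x+N),
\]
whose kernel is $M\cap N$ and which satisfies $\|Qx\| = \max\{\dist(x,M),\dist(x,N)\}$. The first key step is to identify the adjoint: using the canonical isometric isomorphisms $(X/M)^*\cong M^\perp$ and $(X/N)^*\cong N^\perp$ (every functional annihilating $M$ factors through the quotient map $q_M$, and conversely), a direct computation gives $Q^*(\phi,\psi)=q_M^*\phi+q_N^*\psi$, so that $\operatorname{ran}(Q^*)=M^\perp+N^\perp$. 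By the closed range theorem, $\operatorname{ran}(Q)$ is norm-closed if and only if $\operatorname{ran}(Q^*)=M^\perp+N^\perp$ is norm-closed. Everything thus reduces to the purely primal equivalence: $\operatorname{ran}(Q)$ is closed if and only if $M+N$ is closed.

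For the implication $M+N$ closed $\Rightarrow\operatorname{ran}(Q)$ closed, I would exhibit $\operatorname{ran}(Q)$ as the kernel of a bounded operator. Since $M+N$ is assumed closed, $X/(M+N)$ is a Banach space, and the inclusions $M,N\subset M+N$ induce bounded quotient maps $\pi_M\colon X/M\to X/(M+N)$ and $\pi_N\colon X/N\to X/(M+N)$. Define $\Psi(u,v)=\pi_M(u)-\pi_N(v)$. One checks immediately that $\operatorname{ran}(Q)\subseteq\ker\Psi$, and conversely, if $\Psi(u,v)=0$, one lifts $u$ to some $x\in X$ and uses $\pi_N(v)=x+(M+N)$ to adjust the representative by an element of $M$, producing an $x'$ with $Qx'=(u,v)$; hence $\operatorname{ran}(Q)=\ker\Psi$ is closed.

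For the converse $\operatorname{ran}(Q)$ closed $\Rightarrow M+N$ closed, closedness of the range together with $\ker Q=M\cap N$ makes the induced injection $X/(M\cap N)\to(X/M)\oplus(X/N)$ bounded below, yielding a constant $c>0$ with $\max\{\dist(x,M),\dist(x,N)\}\geq c\,\dist(x,M\cap N)$ for all $x\in X$. Specialising to $x=m\in M$ gives $\dist(m,N)\geq c\,\dist(m,M\cap N)$, which says precisely that the natural map $J\colon M/(M\cap N)\to X/N$, $m+(M\cap N)\mapsto m+N$, is injective and bounded below, hence has closed range $(M+N)/N$ in $X/N$. Since $q_N\colon X\to X/N$ is continuous and $q_N^{-1}\big((M+N)/N\big)=M+N$, we conclude that $M+N$ is closed.

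I expect the main obstacle to be this last primal equivalence rather than the duality bookkeeping: the forward direction requires hitting on the correct auxiliary operator $\Psi$ into $X/(M+N)$, and the converse hinges on translating ``bounded below'' into closedness of $(M+N)/N$ and pulling it back along $q_N$. Once these are in place, the identification $\operatorname{ran}(Q^*)=M^\perp+N^\perp$ and the invocation of the closed range theorem are routine, and by symmetry (or by applying the closed range theorem to $Q^*$) one obtains the equivalence in both directions.
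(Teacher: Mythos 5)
Your proposal is correct, but it takes a genuinely different route from the paper, because the paper gives no argument of its own: it simply cites Theorem~4.8 on p.~221 of Kato's \emph{Perturbation theory for linear operators}, where the equivalence is obtained from the theory of the minimum gap $\gamma(M,N)$ between subspaces (closedness of $M+N$ is characterised by $\gamma(M,N)>0$, and the gap is shown to be essentially invariant under passage to annihilators). Your argument instead packages everything into the single operator $Q=(q_M,q_N)\colon X\to (X/M)\oplus(X/N)$ and invokes the closed range theorem, using $(X/M)^*\cong M^\perp$ and $(X/N)^*\cong N^\perp$ to identify $\operatorname{ran}(Q^*)=M^\perp+N^\perp$. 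The two nontrivial primal steps are both sound as you set them up: realising $\operatorname{ran}(Q)=\ker\Psi$ with $\Psi(u,v)=\pi_M u-\pi_N v$ (this is exactly where closedness of $M+N$ enters, since it makes $X/(M+N)$ a Hausdorff normed space, so that the kernel is closed), and, for the converse, using the open mapping theorem on $X/(M\cap N)\to\operatorname{ran}(Q)$ to get a constant $c>0$ with $\max\{\dist(x,M),\dist(x,N)\}\geq c\,\dist(x,M\cap N)$, which upon restriction to $x\in M$ shows $(M+N)/N$ is closed in $X/N$ and hence $M+N=q_N^{-1}\bigl((M+N)/N\bigr)$ is closed. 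What your approach buys is a self-contained proof resting only on standard functional analysis (quotient dualities, the closed range theorem, the open mapping theorem); what Kato's gap-theoretic approach buys is quantitative information, since $\gamma(M,N)$ measures \emph{how} closed the sum is, which is the kind of quantitative regularity this paper trades in elsewhere. It is worth noting that the lower bound you extract in the converse direction is precisely the linear regularity of the pair $(M,N)$ in the sense of Bauschke--Borwein used in Propositions~\ref{coro:closedSum-bdlyBregmanReg} and~\ref{prop:lin-Bregman-reg-2sc}, so your argument incidentally also reproves the implication ``closed sum implies linear regularity'' that the paper cites from~\cite[Corollary~4.5]{Bauschke1993}.
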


\begin{proof}
  See, e.g. Theorem~4.8 in~\cite[p.~221]{Kato1976}.
\end{proof}

Using this connection, we can now recover the known theorem on the convergence of the alternating algorithm from our results on the iteration of Bregman projections.

\begin{corollary}[{Theorem~10 in~\cite[p.,~93]{Deutsch1979}, cf. Theorem~2.2 in~\cite[p.~749]{Pinkus2015}}]
  \label{cor:ConvergenceAlternatingAlgorithm}
  Let $X$ be a uniformly convex and uniformly smooth Banach space. Moreover let $M, N\subset X$ be closed linear subspaces with closed sum $M+N$. Then for every $x\in X$ the sequence defined by
  \[
    x_0 := x, \qquad x_{2n+1} = (I-P_M)x_{2n}, \qquad x_{2n} = (I-P_N)x_{2n-1}
  \]
  converges to $(I-P_{M+N})x$.
\end{corollary}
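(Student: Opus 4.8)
The plan is to translate the alternating algorithm into the language of iterated Bregman projections via Alber's decomposition theorem (Theorem~\ref{thm:alber:decomp}), and then invoke the convergence result for iterated Bregman projections (Proposition~\ref{prop:altern-bregman:Conv}) in the dual space. The key observation, already recorded in equation~\eqref{eq:BregmanMetric} and the displayed consequence that follows it, is that
\[
  (I-P_N)(I-P_M)x = j_{p^*}\!\left(\Pi^{p^*}_{N^\perp}\Pi^{p^*}_{M^\perp}j_p(x)\right).
\]
More generally, iterating this identity, the alternating sequence in the statement satisfies $x_n = j_{p^*}(y_n)$, where $(y_n)$ is the sequence of alternating Bregman projections onto $M^\perp$ and $N^\perp$ in $X^*$, started at $y_0 = j_p(x_0) = j_p(x)$. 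So the strategy is: apply $j_p$ to push everything into the dual, recognize the resulting sequence as an iterated Bregman projection sequence, conclude its convergence, and then apply $j_{p^*}$ to come back.

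First I would make the reduction precise. Using that $j_p$ and $j_{p^*}$ are mutually inverse bijections, I would set $y_n := j_p(x_n)$ and show by induction that $y_{2n+1} = \Pi^{p^*}_{M^\perp} y_{2n}$ and $y_{2n} = \Pi^{p^*}_{N^\perp} y_{2n-1}$. The base case is $y_0 = j_p(x)$, and each inductive step is exactly the identity~\eqref{eq:BregmanMetric} rewritten as $j_p((I-P_M)z) = \Pi^{p^*}_{M^\perp} j_p(z)$. Next I would verify that the hypotheses of Proposition~\ref{prop:altern-bregman:Conv} hold in the dual space $X^*$: the space $X^*$ is uniformly convex and uniformly smooth because $X$ is (uniform convexity and uniform smoothness are dual properties, as noted in the preliminaries), and the sum $M^\perp + N^\perp$ is closed in $X^*$ because $M+N$ is closed in $X$, by Proposition~\ref{prop:ClosedSumEquivalence}. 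The exponent on the dual side is $p^*$, which is the correct conjugate exponent for the Bregman distance $D_{p^*}$ on $X^*$.

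With these hypotheses in hand, Proposition~\ref{prop:altern-bregman:Conv} applied to the pair $(M^\perp, N^\perp)$ in $X^*$ yields that $y_n \to \Pi^{p^*}_{M^\perp \cap N^\perp} j_p(x)$. Since $j_{p^*}$ is continuous (being the duality map of a uniformly smooth space, hence norm-to-norm continuous), applying it gives $x_n = j_{p^*}(y_n) \to j_{p^*}\!\left(\Pi^{p^*}_{M^\perp \cap N^\perp} j_p(x)\right)$. The final step is to identify this limit with $(I-P_{M+N})x$. I would use the standard annihilator identity $M^\perp \cap N^\perp = (M+N)^\perp$ together with Alber's decomposition applied to the single subspace $M+N$, namely $(I-P_{M+N})x = j_{p^*}\!\left(\Pi^{p^*}_{(M+N)^\perp} j_p(x)\right)$, which is precisely~\eqref{eq:BregmanMetric} with $M$ replaced by $M+N$.

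**The main obstacle** I anticipate is purely bookkeeping rather than conceptual: ensuring the inductive bookkeeping of indices is consistent, so that the alternating pattern $(I-P_M)$ then $(I-P_N)$ on $X$ matches the pattern $\Pi^{p^*}_{M^\perp}$ then $\Pi^{p^*}_{N^\perp}$ on $X^*$ with no parity mismatch, and confirming that $M^\perp+N^\perp$ is taken inside $X^*$ with the weak$^*$ versus norm closure subtleties handled correctly (here reflexivity of $X$ makes the annihilators behave well and makes Proposition~\ref{prop:ClosedSumEquivalence} directly applicable). The genuinely substantive analytic work — convergence of the iterated Bregman projections — has already been done in the previous section, so this corollary is essentially a dualization argument, and I expect the proof to be short once the translation dictionary is set up.
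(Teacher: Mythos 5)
Your proposal is correct and follows essentially the same route as the paper: dualize via Alber's decomposition \eqref{eq:BregmanMetric} so that $x_n=j_{p^*}(y_n)$ for the iterated Bregman projection sequence onto $M^\perp$, $N^\perp$ in $X^*$, apply Proposition~\ref{prop:altern-bregman:Conv} (using Proposition~\ref{prop:ClosedSumEquivalence} for the closedness of $M^\perp+N^\perp$), and identify the limit through the continuity of $j_{p^*}$ and $(M^\perp\cap N^\perp)^\perp=M+N$. No gaps; the paper's proof is exactly this dualization argument.
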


\begin{proof}
  Note that by Proposition~\ref{prop:ClosedSumEquivalence} the sum $M^\perp+N^\perp$ is closed. Therefore we may apply Proposition~\ref{prop:altern-bregman:Conv} to obtain that the sequence defined by
  \[
    y_0 = j_p(x), \qquad y_{2n+1} = \Pi_{M^\perp}^ {p^*} y_{2n}, \qquad y_{2n} = \Pi_{N^\perp}^{p^*} y_{2n-1},
  \]
  converges to $\Pi_{M^\perp\cap N^\perp}^{p^*} j_p(x)$. Moreover note that by~\eqref{eq:BregmanMetric} we have
  $x_n = j_{p^*}(y_n)$ and hence the continuity of the duality mapping implies that
  \[
    \lim_{n\to\infty} x_n = j_{p^*}(\Pi^{p*}_{M\perp\cap N^\perp} j_p(x)) = (I-P_{(M^\perp\cap N^\perp)^\perp})x
  \]
  where the last equality follows again from~\eqref{eq:BregmanMetric}. Noting that $(M^\perp\cap N^\perp)^\perp = M+N$ finishes the proof.
\end{proof}

Using the result on the rate of convergence of the alternating Bregman projections, we are also able to give a result on the convergence speed of the alternating algorithm.

\begin{proposition}
  Let $X$ be $\rho$-convex and $\sigma$-smooth and let $\sigma \leq p, p^* \leq \rho$ with $\frac{1}{p}+ \frac{1}{p^*} = 1$. Further let $M,N\subset X$ be closed linear subspaces such that their sum is closed and for every bounded subset $S\subset X$, there is $\kappa\geq 1$ such that
  \begin{equation}\label{eq:DualLinBregReg}
    \min\{d(x,M)^{p}, d(x,N)^{p}\} \leq \frac{\kappa-1}{\kappa} \|x\|^p + \frac{1}{\kappa} d(x,M+N)^p
  \end{equation}
  is satisfied for all $x\in S$. Then, for every $x\in X$ the sequence defined by
  \[
    x_0 := x, \qquad x_{2n+1} := (I-P_M)x_{2n}\quad\text{and}\quad x_{2n} := (I-P_N)x_{2n-1}
  \]
  converges to $(I-P_{M + N})x$. Moreover there are constants $C>0$ and $q\in (0,1)$ such that the sequence satisfies
  \begin{equation}
    \left\|(I-P_{M + N})x-x_n\right\| \leq C q^{n},
  \end{equation}
  that is, it converges linearly.
\end{proposition}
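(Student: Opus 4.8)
The plan is to transport the whole problem into the dual space $X^*$ via the duality mappings $j_p$ and $j_{p^*}$ and then to apply Theorem~\ref{thm:bregmanproj:convrate} to the iterated Bregman projections onto the annihilators $M^\perp$ and $N^\perp$. Concretely, I would set $y_0 := j_p(x)$ and consider in $X^*$ the sequence
\[
  y_0 := j_p(x), \qquad y_{2n+1} := \Pi^{p^*}_{M^\perp} y_{2n}, \qquad y_{2n} := \Pi^{p^*}_{N^\perp} y_{2n-1}.
\]
An induction based on~\eqref{eq:BregmanMetric} and on the fact that $j_p$ and $j_{p^*}$ are mutually inverse shows $x_n = j_{p^*}(y_n)$ for every $n$. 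Since $X$ is $\rho$-convex and $\sigma$-smooth, the dual $X^*$ is $\sigma'$-convex and $\rho'$-smooth with conjugate exponents, and under conjugation the standing assumption $\sigma \le p \le \rho$ passes to $\rho' \le p^* \le \sigma'$, which is exactly the bracketing needed to run Theorem~\ref{thm:bregmanproj:convrate} in $X^*$ with exponent $p^*$; by Proposition~\ref{prop:ClosedSumEquivalence} the sum $M^\perp + N^\perp$ is moreover closed.

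The decisive, and I expect most delicate, step is to recognise that the opaque hypothesis~\eqref{eq:DualLinBregReg} is nothing but the bounded linear Bregman regularity of the pair $(M^\perp,N^\perp)$ in $X^*$. To establish this dictionary I would compute, for $\xi = j_p(x)$,
\[
  D_{p^*}(M^\perp, \xi) = D_{p^*}(\Pi^{p^*}_{M^\perp}\xi, \xi) = D_p\bigl(x, (I-P_M)x\bigr) = \tfrac1p\bigl(\|x\|^p - d(x,M)^p\bigr),
\]
where the middle equality uses Proposition~\ref{prop:BregmanDistanceProperties}(v) together with $x = j_{p^*}(\xi)$ and $(I-P_M)x = j_{p^*}(\Pi^{p^*}_{M^\perp}\xi)$ from~\eqref{eq:BregmanMetric}, while the last equality holds because $x-(I-P_M)x = P_Mx\in M$ and $j_p((I-P_M)x)=\Pi^{p^*}_{M^\perp}\xi\in M^\perp$, so the pairing term in the Bregman distance vanishes and $\|(I-P_M)x\|=d(x,M)$. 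The same identity holds for $N^\perp$ and, since $M^\perp\cap N^\perp=(M+N)^\perp$, for $(M+N)^\perp$. Substituting the three identities into the defining inequality $D_{p^*}((M+N)^\perp,\xi)\le \kappa\max\{D_{p^*}(M^\perp,\xi),D_{p^*}(N^\perp,\xi)\}$ and cancelling $\tfrac1p$ reduces it, after elementary rearrangement, precisely to~\eqref{eq:DualLinBregReg}. As $j_p$ is a bijection carrying bounded sets to bounded sets, the bounded form of~\eqref{eq:DualLinBregReg} yields bounded linear Bregman regularity of $(M^\perp,N^\perp)$, which by Proposition~\ref{prop:bregman:regularity} is equivalent to linear Bregman regularity.

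With these two ingredients Theorem~\ref{thm:bregmanproj:convrate} applies to $(y_n)$ in $X^*$: the sequence converges to $y^* := \Pi^{p^*}_{M^\perp\cap N^\perp} j_p(x)$ and satisfies $D_{p^*}(y^*,y_n)\le Cq^n$ for some $C>0$ and $q\in(0,1)$. Transporting back through $x_n=j_{p^*}(y_n)$ and the continuity of $j_{p^*}$, exactly as in the proof of Corollary~\ref{cor:ConvergenceAlternatingAlgorithm}, gives $x_n\to j_{p^*}(y^*)=(I-P_{(M^\perp\cap N^\perp)^\perp})x=(I-P_{M+N})x$, using $(M^\perp\cap N^\perp)^\perp=M+N$. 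For the rate I would pull the Bregman estimate back to $X$: by Proposition~\ref{prop:BregmanDistanceProperties}(v),
\[
  D_p\bigl(x_n, (I-P_{M+N})x\bigr) = D_{p^*}(y^*,y_n) \le Cq^n,
\]
and then, since the sequence is bounded and $\sigma\le p\le\rho$, inequality~\eqref{eq:bregman:sym:1} of Proposition~\ref{coro:bregman:sym} yields $C_\rho\|x_n-(I-P_{M+N})x\|^\rho \le D_p(x_n,(I-P_{M+N})x)\le Cq^n$, whence $\|x_n-(I-P_{M+N})x\|\le C'(q^{1/\rho})^n$ with $q^{1/\rho}\in(0,1)$, the asserted linear rate.

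The hard part is genuinely the identification in the second paragraph: getting the orthogonality bookkeeping and the conjugate-exponent bracketing exactly right so that~\eqref{eq:DualLinBregReg} matches linear Bregman regularity of the annihilator pair on the nose. Once that correspondence is in hand, the convergence of the algorithm and its linear rate follow by transferring the conclusions of Theorem~\ref{thm:bregmanproj:convrate} through the duality map, with the sole additional care of using the metric--Bregman comparison~\eqref{eq:bregman:sym:1} in $X$ rather than in $X^*$ for the final norm estimate.
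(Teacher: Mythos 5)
Your proposal is correct and follows essentially the same route as the paper: transporting the iteration to $X^*$ via $x_n=j_{p^*}(y_n)$, identifying~\eqref{eq:DualLinBregReg} with bounded linear Bregman regularity of $(M^\perp,N^\perp)$ through the identity $D_{p^*}(M^\perp,j_p(x))=\tfrac1p\bigl(\|x\|^p-d(x,M)^p\bigr)$, applying Theorem~\ref{thm:bregmanproj:convrate} in the dual, and pulling the rate back with Proposition~\ref{coro:bregman:sym}. Your explicit remark on how the exponent bracketing dualises is a detail the paper leaves implicit, but the argument is the same.
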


\begin{proof}
  First, as in the proof of Corollary~\ref{cor:ConvergenceAlternatingAlgorithm}, observe that by setting
  \[
    y_0 = j_p(x), \qquad y_{2n+1} = \Pi_{M^\perp}^ {p^*} y_{2n}, \qquad y_{2n} = \Pi_{N^\perp}^{p^*} y_{2n-1},
  \]
  we obtain~$x_n = j_{p^*}(y_n)$ from~\eqref{eq:BregmanMetric}. We now want to apply Theorem~\ref{thm:bregmanproj:convrate} to obtain the desired convergence rate. In order to be able to do so, we show that~\eqref{eq:DualLinBregReg} translates to the pair $(M^\perp,N^\perp)$ being boundedly linear Bregman regular. Observe first that for $y=j_p(x)$ we obtain
  \begin{equation}\label{eq:BregDistDualCond}
    \begin{aligned}
      D_{p^{*}}(M^\perp,y) &= D_{p^{*}}(\Pi^{p^*}_{M^\perp}y,y) = D_{p}(j_{p^{*}}(\Pi^{p^*}_{M^\perp}y), j_{p^{*}}(y)) = D_p(x, x-P_Mx) \\
      &= \frac{1}{p} \|x\|^p - \frac{1}{p} \|x-P_Mx\|^p - \langle j_p(x-P_Mx), P_Mx\rangle \\
      &= \frac{1}{p} \|x\|^p - \frac{1}{p} \|x-P_Mx\|^p = \frac{1}{p} \|x\|^p - \frac{1}{p} d(x,M)^p
    \end{aligned}
  \end{equation}
  from~\eqref{eq:BregmanMetric}, Proposition~\ref{prop:BregmanDistanceProperties} and the simple observation that
  \[
    \langle j_p(x-P_Mx), P_Mx\rangle = \langle \Pi^{p^*}_{M^\perp}j_p(x),P_Mx\rangle = 0
  \]
  where we again used~\eqref{eq:BregmanMetric}. Rearranging the terms in~\eqref{eq:DualLinBregReg} leads to
  \[
    \|x\|^p - d(x,M+N)^p \leq \kappa \left(\|x\|^p-\min\{d(x,M)^p,d(x,N)^p\}\right)
  \]
  which we may further rearrange to
  \[
    \frac{1}{p}\|x\|^p - \frac{1}{p} d(x,M+N)^p \leq \kappa \max\left\{\frac{1}{p}\|x\|^p-\frac{1}{p}d(x,M)^p,\frac{1}{p}\|x\|^p-\frac{1}{p}d(x,N)^p\right\}
  \]
  Inserting~\eqref{eq:BregDistDualCond} and the corresponding relations for $N$ and $M+N$ we obtain
  \[
    D_{p^*}(M^\perp\cap N^\perp, j_p(x)) \leq \kappa \max\left\{D_{p^*}(M^\perp, j_p(x)),D_{p^*}(N^\perp, j_p(x))\right\}
  \]
  which is the inequality required for linear Bregman regularity. It remains to be shown that every bounded subset of $X^*$ can be realised as the image of a bounded subset of $X$ via the duality mapping~$j_p$. With this aim, first recall that the duality mapping of a reflexive space is bijective and that uniformly convex and uniformly smooth spaces are reflexive. Since the duality mapping on uniformly smooth spaces is uniformly continuous on bounded sets, it also maps bounded sets into bounded sets which finishes the proof of the claim that $(M^\perp, N^\perp)$ is boundedly linear Bregman regular.

  Now, using Theorem~\ref{thm:bregmanproj:convrate} and Proposition~\ref{coro:bregman:sym}, we obtain
  the existence of constants $C_1, C_2>0$ and $q'\in (0,1)$ such that
  \begin{align*}
    \|(I-P_{M+N})x-x_n\| &\leq C_1 D_p(x_n,(I-(P_{M+N}))x)^{1/\rho} = C_1 D_{p^*}(\Pi^{p^*}_{M\perp\cap N^\perp}y_0,y_n)^{1/\rho}\\
    &\leq C_1C_2 (q')^{n/\rho} =: C q^{n} 
  \end{align*}
  as claimed since $q=(q')^{1/\rho}\in (0,1)$.
\end{proof}

Since the dual space of a $2$-convex and $2$-smooth space is also $2$-convex and $2$-smooth, we obtain the following corollary.

\begin{corollary}
  Let $X$ be $2$-convex and $2$-smooth and let $M,N\subset X$ be closed linear subspaces such that their sum is closed. Then, for every $x\in X$ the sequence defined by
  \[
    x_0 := x, \qquad x_{2n+1} := (I-P_M)x_{2n}\quad\text{and}\quad x_{2n} := (I-P_N)x_{2n-1}
  \]
  converges to $(I-P_{M + N})x$. Moreover there are constants $C>0$ and $q\in (0,1)$ such that the sequence satisfies
  \begin{equation}
    \left\|(I-P_{M + N})x-x_n\right\| \leq C q^{n},
  \end{equation}
  i.e. the convergence is linear.
\end{corollary}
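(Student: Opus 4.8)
The plan is to verify that all hypotheses of the preceding proposition are automatically met in the $2$-convex and $2$-smooth setting, so that the corollary drops out as a special case. First I would record that the constraint $\frac{1}{p}+\frac{1}{p^*}=1$ together with the requirement $\sigma \leq p, p^* \leq \rho$ forces $p = p^* = \sigma = \rho = 2$ here, so the power-type conditions hold trivially. The only hypothesis of the preceding proposition that is not manifestly satisfied is the regularity condition~\eqref{eq:DualLinBregReg}, and discharging it is the real content of the proof.

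The key step is to recognise, exactly as in the proof of the preceding proposition, that~\eqref{eq:DualLinBregReg} is merely a reformulation---through the identity~\eqref{eq:BregDistDualCond}---of the statement that the annihilator pair $(M^\perp, N^\perp)$ is boundedly linear Bregman regular in the dual space $X^*$ with respect to the exponent $p^* = 2$. Thus it suffices to establish this regularity directly. Here I would invoke two facts: that $X^*$ is itself $2$-convex and $2$-smooth, since the duality between uniform convexity and uniform smoothness preserves power type $2$; and that $M+N$ being closed implies $M^\perp + N^\perp$ is closed by Proposition~\ref{prop:ClosedSumEquivalence}. With these in hand, Proposition~\ref{prop:lin-Bregman-reg-2sc}, applied to the space $X^*$ and the subspaces $M^\perp, N^\perp$, yields precisely the linear Bregman regularity of $(M^\perp, N^\perp)$, hence~\eqref{eq:DualLinBregReg}.

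Once~\eqref{eq:DualLinBregReg} is verified, the conclusion---both convergence of the alternating sequence to $(I-P_{M+N})x$ and the linear rate $\|(I-P_{M+N})x - x_n\| \leq C q^n$---follows immediately by invoking the preceding proposition. I do not expect a genuine obstacle; the step demanding the most care is the duality bookkeeping underlying the reformulation of~\eqref{eq:DualLinBregReg}, namely confirming that $p^* = 2$, that the duality mapping on $X^*$ is the inverse of $j_p$, and that $(M^\perp \cap N^\perp)^\perp = M+N$, so that the limit transported back to $X$ is indeed $(I-P_{M+N})x$. As an alternative route that avoids the preceding proposition altogether, one could apply Corollary~\ref{thm:bregmanproj:convrate-2-sc} directly to the iterated Bregman projections $y_{2n+1} = \Pi^2_{M^\perp} y_{2n}$, $y_{2n} = \Pi^2_{N^\perp} y_{2n-1}$ in $X^*$ started from $y_0 = j_p(x)$, and then transfer the resulting linear estimate back to $X$ via the relation~\eqref{eq:BregmanMetric} and the global norm--Bregman equivalence of Corollary~\ref{coro:bregman:sym:2-sc}.
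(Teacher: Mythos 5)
Your proposal is correct and takes essentially the same route as the paper, which derives the corollary from the single observation that $X^*$ is again $2$-convex and $2$-smooth, so that Proposition~\ref{prop:lin-Bregman-reg-2sc} applied in $X^*$ to the closed-sum pair $(M^\perp,N^\perp)$ (closedness via Proposition~\ref{prop:ClosedSumEquivalence}) yields the regularity condition~\eqref{eq:DualLinBregReg} and the preceding proposition then gives the linear rate. Your duality bookkeeping ($p=p^*=2$, $j_{p^*}=j_p^{-1}$, $(M^\perp\cap N^\perp)^\perp=M+N$) is exactly what the paper leaves implicit.
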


\bigskip
{\noindent\textbf{\textsf{Acknowledgement.}}} This research was funded by the Tyrolean Science Fund (Tiroler Wissenschaftsförderung). The authors wish to thank Simeon Reich and Rafał Zalas for  interesting discussions on this topic. In addition, we thank Simeon Reich for carefully reading the article and for his helpful comments.

\vspace{8mm}
\noindent
Christian Bargetz\\
Department of Mathematics\\
University of Innsbruck\\
Technikerstraße 13,
6020 Innsbruck,
Austria\\
\texttt{christian.bargetz@uibk.ac.at}\\[4mm]
\noindent
Emir Medjic\\
Department of Mathematics\\
University of Innsbruck\\
Technikerstraße 13,
6020 Innsbruck,
Austria\\
\texttt{emir.medjic@uibk.ac.at}

\begin{thebibliography}{10}

\bibitem{Alber1996}
Yakov~I. Alber.
\newblock Metric and generalized projection operators in {B}anach spaces:
  properties and applications.
\newblock In {\em Theory and applications of nonlinear operators of accretive
  and monotone type}, volume 178 of {\em Lecture Notes in Pure and Appl.
  Math.}, pages 15--50. Dekker, New York, 1996.

\bibitem{Alber2005}
Yakov~I. Alber.
\newblock James orthogonality and orthogonal decompositions of {B}anach spaces.
\newblock {\em J. Math. Anal. Appl.}, 312(1):330--342, 2005.

\bibitem{Alber1997}
Yakov~I. Alber and Dan Butnariu.
\newblock Convergence of {B}regman projection methods for solving consistent
  convex feasibility problems in reflexive {B}anach spaces.
\newblock {\em J. Optim. Theory Appl.}, 92(1):33--61, 1997.

\bibitem{AA1965:Convergence}
Ichiro Amemiya and Tsuyoshi And\^o.
\newblock Convergence of random products of contractions in {H}ilbert space.
\newblock {\em Acta Sci. Math. (Szeged)}, 26:239--244, 1965.

\bibitem{Bauschke1993}
Heinz~H. Bauschke and Jonathan~M. Borwein.
\newblock On the convergence of von {N}eumann's alternating projection
  algorithm for two sets.
\newblock {\em Set-Valued Anal.}, 1(2):185--212, 1993.

\bibitem{BauschkeEtAl2015}
Heinz~H. Bauschke, Jiawei Chen, and Xianfu Wang.
\newblock A {B}regman projection method for approximating fixed points of
  quasi-{B}regman nonexpansive mappings.
\newblock {\em Appl. Anal.}, 94(1):75--84, 2015.

\bibitem{BenyaminiLindenstrauss}
Yoav Benyamini and Joram Lindenstrauss.
\newblock {\em Geometric nonlinear functional analysis. {V}ol. 1}, volume~48 of
  {\em American Mathematical Society Colloquium Publications}.
\newblock American Mathematical Society, Providence, RI, 2000.

\bibitem{Bro1973:RotundReflexive}
Aldric~L. Brown.
\newblock A rotund reflexive space having a subspace of codimension two with a
  discontinuous metric projection.
\newblock {\em Michigan Math. J.}, 21:145--151, 1974.

\bibitem{Butnariu2000}
Dan Butnariu, Alfredo~N. Iusem, and Elena Resmerita.
\newblock Total convexity for powers of the norm in uniformly convex {B}anach
  spaces.
\newblock {\em J. Convex Anal.}, 7(2):319--334, 2000.

\bibitem{Cioranescu1990}
Ioana Cioranescu.
\newblock {\em Geometry of {B}anach spaces, duality mappings and nonlinear
  problems}, volume~62 of {\em Mathematics and its Applications}.
\newblock Kluwer Academic Publishers Group, Dordrecht, 1990.

\bibitem{Deutsch1979}
Frank Deutsch.
\newblock The alternating method of von {N}eumann.
\newblock In {\em Multivariate approximation theory ({P}roc. {C}onf., {M}ath.
  {R}es. {I}nst., {O}berwolfach, 1979)}, volume~51 of {\em Internat. Ser.
  Numer. Math.}, pages 83--96. Birkh\"auser, Basel-Boston, Mass., 1979.

\bibitem{Fra1973:AlternatingMethod}
Carlo Franchetti.
\newblock On the alternating approximation method.
\newblock {\em Boll. Un. Mat. Ital. (4)}, 7:169--175, 1973.

\bibitem{GoebelReich1984}
Kazimierz Goebel and Simeon Reich.
\newblock {\em Uniform convexity, hyperbolic geometry, and nonexpansive
  mappings}, volume~83 of {\em Monographs and Textbooks in Pure and Applied
  Mathematics}.
\newblock Marcel Dekker, Inc., New York, 1984.

\bibitem{Hal1962:Products}
Israel Halperin.
\newblock The product of projection operators.
\newblock {\em Acta Sci. Math. (Szeged)}, 23:96--99, 1962.

\bibitem{Hir1958:BestApproximationII}
Rudolf~A. Hirschfeld.
\newblock On best approximations in normed vector spaces. {II}.
\newblock {\em Nieuw Arch. Wisk. (3)}, 6:99--107, 1958.

\bibitem{Kato1976}
Tosio Kato.
\newblock {\em Perturbation theory for linear operators}.
\newblock Springer-Verlag, Berlin-New York, second edition, 1976.
\newblock Grundlehren der Mathematischen Wissenschaften, Band 132.

\bibitem{Kle1963:Hirschfeld}
Victor Klee.
\newblock On a problem of {H}irschfeld.
\newblock {\em Nieuw Arch. Wisk. (3)}, 11:22--26, 1963.

\bibitem{KM2014:ThreeProjections}
Eva Kopeck\'a and Vladim\'{\i}r M\"uller.
\newblock A product of three projections.
\newblock {\em Studia Math.}, 223(2):175--186, 2014.

\bibitem{KR2004:VonNeumann}
Eva Kopeck\'{a} and Simeon Reich.
\newblock A note on the von {N}eumann alternating projections algorithm.
\newblock {\em J. Nonlinear Convex Anal.}, 5(3):379--386, 2004.

\bibitem{Lin1964:NonlinearProjections}
Joram Lindenstrauss.
\newblock On nonlinear projections in {B}anach spaces.
\newblock {\em Michigan Math. J.}, 11:263--287, 1964.

\bibitem{MartinMarquez2013}
Victoria Mart\'{\i}n-M\'{a}rquez, Simeon Reich, and Shoham Sabach.
\newblock Bregman strongly nonexpansive operators in reflexive {B}anach spaces.
\newblock {\em J. Math. Anal. Appl.}, 400(2):597--614, 2013.

\bibitem{Pas2012:AmemiyaAndoFails}
Adam {Paszkiewicz}.
\newblock {The Amemiya-Ando conjecture falls}.
\newblock {\em ArXiv e-prints}, March 2012.

\bibitem{Pinkus2015}
Allan Pinkus.
\newblock The alternating algorithm in a uniformly convex and uniformly smooth
  {B}anach space.
\newblock {\em J. Math. Anal. Appl.}, 421(1):747--753, 2015.

\bibitem{Reem2019}
Daniel Reem, Simeon Reich, and Alvaro De~Pierro.
\newblock Re-examination of {B}regman functions and new properties of their
  divergences.
\newblock {\em Optimization}, 68(1):279--348, 2019.

\bibitem{Reich1982}
Simeon Reich.
\newblock Nonlinear semigroups, accretive operators, and applications.
\newblock In {\em Nonlinear phenomena in mathematical sciences ({A}rlington,
  {T}ex., 1980)}, pages 831--838. Academic Press, New York, 1982.

\bibitem{Rei1983:LimitTheorem}
Simeon Reich.
\newblock A limit theorem for projections.
\newblock {\em Linear and Multilinear Algebra}, 13(3):281--290, 1983.

\bibitem{Rei1992:Review}
Simeon Reich.
\newblock Book {R}eview: {G}eometry of {B}anach spaces, duality mappings and
  nonlinear problems.
\newblock {\em Bull. Amer. Math. Soc. (N.S.)}, 26(2):367--370, 1992.

\bibitem{Reich1996}
Simeon Reich.
\newblock A weak convergence theorem for the alternating method with {B}regman
  distances.
\newblock In {\em Theory and applications of nonlinear operators of accretive
  and monotone type}, volume 178 of {\em Lecture Notes in Pure and Appl.
  Math.}, pages 313--318. Dekker, New York, 1996.

\bibitem{Resmerita2004}
Elena Resmerita.
\newblock On total convexity, {B}regman projections and stability in {B}anach
  spaces.
\newblock {\em J. Convex Anal.}, 11(1):1--16, 2004.

\bibitem{Schoepfer2008}
Frank Sch\"opfer, Thomas Schuster, and Alfred~K. Louis.
\newblock Metric and {B}regman projections onto affine subspaces and their
  computation via sequential subspace optimization methods.
\newblock {\em J. Inverse Ill-Posed Probl.}, 16(5):479--506, 2008.

\bibitem{Schuster2012}
Thomas Schuster, Barbara Kaltenbacher, Bernd Hofmann, and Kamil~S. Kazimierski.
\newblock {\em Regularization methods in {B}anach spaces}, volume~10 of {\em
  Radon Series on Computational and Applied Mathematics}.
\newblock Walter de Gruyter GmbH \& Co. KG, Berlin, 2012.

\bibitem{Sti1965:Hirschfeld}
Wilbur~J. Stiles.
\newblock A solution to {H}irschfeld's problem.
\newblock {\em Nieuw Arch. Wisk. (3)}, 13:116--119, 1965.

\bibitem{Suantai2012}
Suthep Suantai, Yeol~Je Cho, and Prasit Cholamjiak.
\newblock Halpern's iteration for {B}regman strongly nonexpansive mappings in
  reflexive {B}anach spaces.
\newblock {\em Comput. Math. Appl.}, 64(4):489--499, 2012.

\bibitem{vNe1949:RingsOfOperators}
John von Neumann.
\newblock On rings of operators. {R}eduction theory.
\newblock {\em Ann. of Math. (2)}, 50:401--485, 1949.

\bibitem{Neu1950:FunctionalOperators}
John von Neumann.
\newblock {\em Functional {O}perators. {II}. {T}he {G}eometry of {O}rthogonal
  {S}paces}.
\newblock Annals of Mathematics Studies, no. 22. Princeton University Press,
  Princeton, N. J., 1950.

\bibitem{XuRoach1991}
Zong~Ben Xu and Gary~F. Roach.
\newblock Characteristic inequalities of uniformly convex and uniformly smooth
  {B}anach spaces.
\newblock {\em J. Math. Anal. Appl.}, 157(1):189--210, 1991.

\bibitem{Bregman1967}
Лев~М. Брэгман.
\newblock Релаксационный метод нахождения
  общей точки выпуклых множеств и его
  применение для решения задач выпуклого
  программирования.
\newblock {\em Ж. вычисл. матем. и матем. физ.},
  7:620--631, 1967.

\end{thebibliography}
\end{document}